\newtheorem{theorem}{Theorem}[section]
\newtheorem{definition}[theorem]{Definition}
\newtheorem{proposition}[theorem]{Proposition}
\newtheorem{corollary}[theorem]{Corollary}
\newtheorem{lemma}[theorem]{Lemma}
\newtheorem{remark}[theorem]{Remark}
\title{On equidistribution theorem for multi-sequences of holomorphic line bundles}
\title{{\Large \bf On equidistribution theorem for multi-sequences of holomorphic line bundles
\thanks{The second author was supported in part by NSFC Grant No. 11901594, the third author was supported by NSFC Grant No. 12001549 and Guangdong Basic and Applied Basic Research Foundation Grant No. 2019A1515110250.}}}
\author{Manli Liu$^1$\ \ \ Weixiong Mai$^2$ \ \ \ Guokuan Shao$^3$}
\begin{document}

\maketitle

\begin{abstract}
Given several sequences of Hermitian holomorphic line bundles $\{(L_{kp}, h_{kp})\}_{p=1}^{\infty}$, we establish the distribution of common zeros of random holomorphic sections of $L_{kp}$ with respect to singular measures. We also study the dimension growth for a sequence of pseudo-effective line bundles.
\end{abstract}

\noindent
{\bf Classification AMS 2020:} 32A60, 32L10, 32U40.

\noindent
{\bf Keywords: } equidistribution, random holomorphic section, pseudo-effective line bundle, current.

\section{Introduction}
The study of equidistribution of zeros of random holomorphic sections has become intensively active recently, which can be applied to quantum chaotic eigenfunctions \cite{bbl, nv} and shed a light on the Quantum unique ergodicity conjecture \cite{rs, hs}.

Shiffman-Zelditeh \cite{sz} established an equidistribution theorem for high powers of a positive line bundle. More results on equidistribution for singular metrics of line bundles were obtained. For example, Dinh-Ma-Marinescu \cite{dmm} explored the equidistribution of zeros of random holomorphic sections for singular Hermitian metrics with a convergence speed. Coman-Marinescu-Nguy\^{e}n \cite{cmn} studied the equidistribution of common zeros of sections of several big line bundles. Coman-Marinescu-Nguy\^{e}n \cite{cmn19} studied equidistribution for spaces of $L^2$-holomorphic sections vanishing along subvarieties. Dinh-Sibony \cite{ds1} first extended equidistribution with respect to general measures with a good convergence speed. Shao \cite{sh1,sh2} provided large family of singular measures to satisfy equidistribution theorems.  See \cite{cm, bcm, bchm, cmm, hs} for more references.

Recently, Coman-Lu-Ma-Marinescu \cite{clmm} studied equidistrbution for a sequence $(L_p, h_p)$ instead of $(L^{\otimes p},h^{\otimes p})$ of a single line bundle $L$. They imposed a natural convergence, that is, the first Chern curvature currents $c_1(L_p,h_p)$ converge to a (non-integral) K\"{a}hler form $\omega$, which can be regarded as a "prequantization" of $\omega$ in the setting of geometric quantization.

In this paper, we establish an equidistribution theorem for several sequences of line bundles. Now we formulate our setting and state our main result. Let $(X, \omega)$ be a K\"{a}hler manifold of $\dim_{\mathbb{C}}X=n$ with a fixed K\"{a}hler form $\omega$.
Let $\{(L_{kp}, h_{kp})\}_{p=1}^{\infty}$ be $m$ sequences of Hermitian holomorphic line bundles on $X$ with (possibly singular) Hermitian metrics $h_{kp}$, where $1\leq k\leq m\leq n$. We endow the space $\mathscr{C}^{\infty}(X,L_{kp})$ of smooth sections $L_{kp}$ with the inner product
\begin{equation*}
\langle s_1,s_2 \rangle:=\int_X \langle s_1,s_2 \rangle_{h_{kp}} \frac{\omega^n}{n!}, s_1, s_2 \in\mathscr{C}^{\infty}(X,L_{kp}),
\end{equation*}
and we set $\|s\|^2=\langle s,s \rangle$. We denote by $L^2(X,L_{kp})$ the completion of $\mathscr{C}^{\infty}(X,L_{kp})$ with respect to this norm. Denote by $H_{(2)}^0(X,L_{kp})$ the Bergman space of $L^2$ holomorphic sections of $L_{kp}$ and let $B_{kp}:=L^2(X,L_{kp})\rightarrow H_{(2)}^0(X,L_{kp})$ be the orthogonal projection. The integral kernel $B_{kp}(x,x')$ of $B_{kp}$ is called the Bergman kernel. The restriction of the Bergman kernel to the diagonal of $X$ is the Bergman kernel function of $H_{(2)}^0(X,L_{kp})$, which we still denote by $B_{kp}$, i.e., $B_{kp}(x)=B_{kp}(x,x)$.
The first assumption is the following:

{\bf Assumption 1:}\ \ There exist a constant $M_0>1$ and $p_0>0$ such that
\begin{equation*}
\frac{A_{kp}^n}{M_0}\leq B_{kp}(x)\leq M_0A_{kp}^n,
\end{equation*}
for any $ x\in X, p\geq p_0, 1\leq k\leq m$, where $A_{kp}$ are positive numbers, $\lim\limits_{p\rightarrow \infty} A_{kp}=\infty$ for $1\leq k\leq m$ with the same order of infinite.

Denote by $\mathbb{CP}H_{(2)}^0(X,L_{kp})$ the associated projective space of $H_{(2)}^0(X,L_{kp})$. Set  $d_{kp}:=\dim\mathbb{CP}H_{(2)}^0(X,L_{kp})$. By Assumption 1, we have
\begin{equation*}
d_{kp}=\int_{X}B_{kp}(x)\omega^{n}-1\approx A_{kp}^n.
\end{equation*}
There exist $M_1>1$ and $p_0>0$, such that
\begin{equation*}
\frac{A_{kp}^n}{M_1}\leq d_{kp}\leq M_1A_{kp}^n.
\end{equation*}

Consider the multi-projective spaces
\begin{equation}\label{e-7283}
\mathbb{X}_p:=\mathbb{CP}H_{(2)}^0(X,L_{1p})\times\cdots \times \mathbb{CP}H_{(2)}^0(X,L_{mp}),
\end{equation}
equipped with a probability (singular) measure $\sigma_{p}$. The standard measure $\sigma_{p}$ is just the product of the Fubini-study volume forms on all components of $\mathbb{X}_p$.
In our main theorem, $\sigma_p$ is the product of moderate measures, which is a generalization of the standard one (cf. \eqref{e-7281}).
The product space with product measure is $\mathbb{P}^X:=\prod\limits_{p=1}^{\infty} \mathbb{X}_p, \sigma=\prod\limits_{p=1}^{\infty} \sigma_{p}$. For $s_p=(s_{1p},\cdots,s_{mp})\in \mathbb{X}_p$, we define $[s_p=0]:=[s_{1p}=0]\wedge\cdots\wedge[s_{mp}=0]$.
With a minor change of the proof for Bertini type theorem with respect to a singular measure in \cite[Section 2]{sh2}. We can deduce that $[s_p=0]$ is well-defined for a.e. $\{s_p\}$ with respect to $\sigma_p$. In fact, if $\sigma_p$ has no mass on any proper analytic subset of $\mathbb{X}_p$, the above statement still holds true, see the proof of \cite[Lemma 2.2, Proposition 2.3]{sh2}.

Now, we give the second natural assumption on the convergence of Chern curvature currents $c_1(L_{kp},h_{kp})$ associated to $h_{kp}$, which can be thought as a "prequantization" process.

{\bf Assumption 2:}\ \  There exist positive closed $(1,1)$-currents $\omega_k$ ($\omega_k\geq 0$ in the sense of currents) with positive measure (i.e.$\int \omega_k\wedge \omega^{n-1}>0$) such that the norms of currents satisfy
\begin{equation*}
\left\|\frac{1}{A_{kp}}c_1(L_{kp},h_{kp})-\omega_k\right\|\leq C_0 A_{kp}^{-a_k},
\end{equation*}
where $C_0, a_k>0$ are constants. Moreover, $\omega_{j_1}\wedge \cdots \wedge \omega_{j_l}$ are well-defined with positive measures, for any multi-index $(j_1,\cdots,j_l)\subset\{1,2,\cdots,m\}$.

Note that one trivial example is $L_{kp}=L_k^{\otimes p}, h_{kp}=h_{k}^{\otimes p}, \omega_k=c_1(L_k,h_k)$ and the non-continuous set of $h_k$ are in general position (cf. \cite{cmn}).

Now we are in a position to state our main theorem:
\begin{theorem}\label{thm1.1}
Let $\{(L_{kp},h_{kp})\}_{p=1}^{\infty}$ be $m$ sequences of Hermitian holomorphic line bundles on a compact K\"{a}hler manifold $X$ of $\dim_{\mathbb{C}}X=n$. If Assumption 1 and Assumption 2 hold, then for $\sigma$-a.e. $\{s_p\}\in \mathbb{P}^{X}$, we have
\begin{equation*}
\frac{1}{\prod\limits_{k=1}^m A_{kp}}[s_p=0]\rightarrow \omega_1\wedge\cdots\wedge \omega_m
\end{equation*}
in the sense of currents.
\end{theorem}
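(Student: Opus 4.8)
The plan is to combine the Lelong–Poincar\'e formula with a concentration estimate for one bundle at a time, and then to pass to the intersection $\omega_1\wedge\cdots\wedge\omega_m$ by an inductive argument on the number of factors.

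\emph{Decomposition and convergence of the mean current.} Fix an orthonormal basis $\{S^{kp}_j\}_{j=0}^{d_{kp}}$ of $H_{(2)}^0(X,L_{kp})$, so that $B_{kp}=\sum_j|S^{kp}_j|^2_{h_{kp}}$. Writing a section as $s_{kp}=\sum_j a_j S^{kp}_j$ with $|a|=1$, the Lelong–Poincar\'e formula yields
\[
[s_{kp}=0]=\gamma_{kp}+\ddc\varphi_{s_{kp}},\qquad \gamma_{kp}:=c_1(L_{kp},h_{kp})+\tfrac12\ddc\log B_{kp},\quad \varphi_{s_{kp}}:=\log\frac{|s_{kp}|_{h_{kp}}}{\sqrt{B_{kp}}}\le 0,
\]
where the Fubini–Study current $\gamma_{kp}$ does not depend on the chosen section. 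First I would prove $\frac{1}{A_{kp}}\gamma_{kp}\to\omega_k$ as currents: the curvature term converges by Assumption~2 with speed $A_{kp}^{-a_k}$, while for the Bergman term Assumption~1 gives $|\log B_{kp}-n\log A_{kp}|\le\log M_0$, so testing $\frac{1}{A_{kp}}\ddc\log B_{kp}$ against a smooth form and integrating by parts leaves a uniformly bounded function $\log(B_{kp}/A_{kp}^n)$ divided by $A_{kp}\to\infty$, which vanishes in the limit.

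\emph{Fluctuation of a single factor.} Next I would show that for the moderate factor measure $\sigma_{kp}$ on $\mathbb{CP}H_{(2)}^0(X,L_{kp})$ and $\sigma_{kp}$-a.e.\ choice, the normalized potential $u_{kp}:=\frac{1}{A_{kp}}\varphi_{s_{kp}}\to 0$ in $L^1(X,\omega^n)$, which forces $\frac{1}{A_{kp}}\ddc\varphi_{s_{kp}}\to 0$. For fixed $x$ one has $-\varphi_{s_{kp}}(x)=-\log|\langle a,e_{kp}(x)\rangle|$ with $e_{kp}(x)=(S^{kp}_j(x))_j/\sqrt{B_{kp}(x)}$ a unit vector; moderateness of $\sigma_{kp}$ (as in \cite{sh2,ds1}) supplies a uniform-in-$x$ mean bound $\int(-\varphi_{s_{kp}}(x))\,d\sigma_{kp}\le C(1+\log d_{kp})$ together with an exponential-integrability estimate. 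By Fubini the expectation of $\int_X(-\varphi_{s_{kp}})\,\omega^n$ is $O(\log d_{kp})=O(\log A_{kp})$, and Jensen's inequality combined with the moderate exponential bound produces a large-deviation estimate
\[
\sigma_{kp}\Big(\tfrac{1}{A_{kp}}\int_X(-\varphi_{s_{kp}})\,\omega^n>\epsilon\Big)\le C_\epsilon\,A_{kp}^{C}\,e^{-c\epsilon A_{kp}}.
\]
Since $A_{kp}\to\infty$, these probabilities are summable in $p$ along the given sequence, so the Borel–Cantelli lemma gives the a.s.\ convergence for each fixed $k$; using that $\sigma=\prod_p\sigma_p$ and that the $m$ components within each $\mathbb{X}_p$ are independent, a countable intersection over $k$ and over a dense sequence of $\epsilon$ handles all factors simultaneously for $\sigma$-a.e.\ $\{s_p\}$.

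\emph{Passage to the wedge product (the main obstacle).} It remains to upgrade the factorwise convergence
\[
T_{kp}:=\tfrac{1}{A_{kp}}[s_{kp}=0]=\tfrac{1}{A_{kp}}\gamma_{kp}+\ddc u_{kp}\longrightarrow\omega_k
\]
to $T_{1p}\wedge\cdots\wedge T_{mp}\to\omega_1\wedge\cdots\wedge\omega_m$. This is where the difficulty lies, since wedge products of currents are not continuous under weak convergence, and it is exactly why Assumption~2 requires every partial product $\omega_{j_1}\wedge\cdots\wedge\omega_{j_l}$ to be well defined with positive mass. I would argue by induction, intersecting one factor at a time: assuming $R_{l-1,p}:=T_{1p}\wedge\cdots\wedge T_{(l-1)p}\to\omega_1\wedge\cdots\wedge\omega_{l-1}$ with uniformly bounded mass, the splitting
\[
T_{1p}\wedge\cdots\wedge T_{lp}=\Big(\tfrac{1}{A_{lp}}\gamma_{lp}\Big)\wedge R_{l-1,p}+\ddc u_{lp}\wedge R_{l-1,p}
\]
reduces matters to two points: that the fluctuation term $\ddc u_{lp}\wedge R_{l-1,p}$ tends to $0$, and that $\frac{1}{A_{lp}}\gamma_{lp}\wedge R_{l-1,p}\to\omega_l\wedge(\omega_1\wedge\cdots\wedge\omega_{l-1})$. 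Both are established not from mere $L^1$-convergence $u_{lp}\to0$, but from the uniform moderate estimates of the previous step, which keep the potentials of the $T_{kp}$ in a class where the Bedford–Taylor/Dinh–Sibony intersection is continuous (convergence in capacity, uniform exponential integrability), thereby preventing any mass of the limiting intersection from escaping onto the pluripolar set where these potentials blow up. Controlling this mass‑escape, and verifying that each inductive step genuinely lands in the admissible class guaranteed by Assumption~2, is the delicate heart of the argument.
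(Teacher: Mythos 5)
Your first two steps are essentially sound and run parallel to parts of the paper: the convergence $\frac{1}{A_{kp}}\gamma_{kp}\to\omega_k$ extracted from Assumptions 1 and 2 is (in product form) the content of Proposition \ref{pro3.1}, and the single-factor fluctuation estimate via moderateness plus Borel--Cantelli is standard. The genuine gap is exactly the step you yourself flag as ``the delicate heart'': you never actually prove $T_{1p}\wedge\cdots\wedge T_{mp}\to\omega_1\wedge\cdots\wedge\omega_m$, and the tools you invoke do not suffice to do it. Weak convergence $T_{kp}\to\omega_k$ together with mere well-definedness of the limit products (which is all Assumption 2 provides) does not imply convergence of the wedges. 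To run your induction you would need (a) convergence \emph{in capacity} of the potentials $u_{lp}$ --- your step 2 only yields $L^1$-convergence with exponential moment bounds, which is weaker --- and (b) a continuity theorem for the intersection $\ddc u_{lp}\wedge R_{l-1,p}$ in which \emph{both} factors move and the limit currents $\omega_k$ may have arbitrarily singular potentials; Bedford--Taylor/Dinh--Sibony continuity results require the potentials of the fixed factor to be bounded, or monotone approximation, or some admissibility that Assumption 2 does not grant, and nothing you have established rules out mass of $T_{1p}\wedge\cdots\wedge T_{lp}$ escaping onto polar sets in the limit. A secondary issue: your Borel--Cantelli step needs $\sum_p A_{kp}^{C}e^{-c\epsilon A_{kp}}<\infty$, which does not follow from $A_{kp}\to\infty$ alone (e.g. $A_{kp}=\log\log p$ fails); the paper's Proposition \ref{pro4.3} implicitly makes the same growth assumption, so this is a shared, but real, caveat.

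This wedge-product difficulty is precisely what the paper's proof is structured to avoid: it never takes a limit of wedge products of random currents. Instead, the full intersection $[s_p=0]$ is a \emph{single} pullback $\Phi_p^{\ast}(\delta_{s_p})$ under the product Kodaira map $\Phi_p:X\to\mathbb{X}_p$ viewed as one meromorphic transform (Lemma \ref{lem2.5}), the error is split into the three terms of \eqref{e-7291}, and the random term $I$ is handled by Dinh--Sibony's criterion (Theorem \ref{thm2.4}) applied on the multi-projective space $\mathbb{X}_p$: the intermediate degrees $\delta_p^1\approx\prod_k A_{kp}$ and $\delta_p^2\approx\prod_k A_{kp}/\sum_k A_{kp}$ (Lemma \ref{lem4.1}) combine with the capacity estimates $R,S,\Delta$ for moderate measures on $\mathbb{X}_p$ (Proposition \ref{pro2.9}) to give a.e.\ convergence of the whole intersection current at once; the deterministic comparison with $\omega_1\wedge\cdots\wedge\omega_m$ is confined to the Fubini--Study currents $\gamma_{kp}$, where it can be done by a telescoping estimate with integration by parts on the globally bounded function $\log B_{kp}$. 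To salvage your factor-by-factor route you would have to prove a bona fide intersection-continuity theorem (capacity convergence of all potentials plus uniform non-escape of mass), which is a substantial piece of pluripotential theory not supplied by your cited references and, in this generality of limit currents $\omega_k$, quite possibly false.
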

\begin{theorem}\label{thm1.2}
With the same notations and assumptions in Theorem \ref{thm1.1}, for any $\alpha>0, $ there exist $C_1>0, C_2>0, p_1>0$ and $E_p^{\alpha}$ such that

\noindent (i)\quad $\sigma_p(E_p^{\alpha})\leq C_1(\sum\limits_{k=1}^mA_{kp})^{-\alpha}$ for any $p>p_1$;

\noindent (ii)\quad for $s_p\in X_p\backslash E_{p}^{\alpha}$ and any $(n-m,n-m)$ form $\phi$ of class $\mathscr{C}^2$,
\begin{equation*}
\begin{array}{rl}
&|\langle\frac{1}{\prod\limits_{k=1}^m A_{kp}}[s_p=0]-\omega_1\wedge\cdots\wedge \omega_m, \phi\rangle|\\
\leq & C_2 \left(\sum\limits_{k=1}^m
\frac{\log A_{kp}}{A_{kp}}+\frac{\log(\sum\limits_{k=1}^mA_{kp})}{\sum\limits_{k=1}^mA_{kp}}+\sum\limits_{k=1}^mA_{kp}^{-a_{k}}\right)\|\phi\|_{\mathscr{C}^2}.
\end{array}
\end{equation*}
\end{theorem}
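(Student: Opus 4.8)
The plan is to reduce the estimate to the codimension-one (single-bundle) analysis and to propagate it through the wedge product by a telescoping argument. Write $\gamma_{kp}:=A_{kp}^{-1}c_1(L_{kp},h_{kp})$ and $T_{kp}:=A_{kp}^{-1}[s_{kp}=0]$, so that the quantity to control is $\langle\bigwedge_{k=1}^m T_{kp}-\bigwedge_{k=1}^m\omega_k,\phi\rangle$. Using the telescoping identity
\begin{equation*}
\bigwedge_{k=1}^m T_{kp}-\bigwedge_{k=1}^m\omega_k=\sum_{j=1}^m\omega_1\wedge\cdots\wedge\omega_{j-1}\wedge(T_{jp}-\omega_j)\wedge T_{j+1,p}\wedge\cdots\wedge T_{mp},
\end{equation*}
and pairing with $\phi$, I reduce to estimating $\langle T_{jp}-\omega_j,\Psi_{jp}\rangle$ for each $j$, where $\Psi_{jp}:=\omega_1\wedge\cdots\wedge\omega_{j-1}\wedge T_{j+1,p}\wedge\cdots\wedge T_{mp}\wedge\phi$, a current of bidegree $(n-1,n-1)$. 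I rely on the fact that all these intersections are well defined for $\sigma$-a.e. $\{s_p\}$, by the Bertini-type genericity used in \cite{sh2} together with the positivity of the mixed products in Assumption 2. By the Poincar\'e--Lelong formula $T_{jp}=\gamma_{jp}+\ddc u_{jp}$ with $u_{jp}:=A_{jp}^{-1}\log\|s_{jp}\|_{h_{jp}}$, so each term splits as $\langle\gamma_{jp}-\omega_j,\Psi_{jp}\rangle+\langle\ddc u_{jp},\Psi_{jp}\rangle$.

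For the fluctuation piece I integrate by parts; since the currents $\omega_k$ and $T_{kp}$ are closed, $\ddc$ falls only on the smooth factor $\phi$, giving $\langle\ddc u_{jp},\Psi_{jp}\rangle=\langle u_{jp},\mu_{jp}\rangle$ with
\begin{equation*}
\mu_{jp}:=\pm\,\omega_1\wedge\cdots\wedge\omega_{j-1}\wedge T_{j+1,p}\wedge\cdots\wedge T_{mp}\wedge\ddc\phi .
\end{equation*}
This $\mu_{jp}$ is a signed measure whose total mass is $O(\|\phi\|_{\mathscr{C}^2})$, because the $\omega_k$ have fixed mass and $\|T_{kp}\|=A_{kp}^{-1}\int_X c_1(L_{kp},h_{kp})\wedge\omega^{n-1}=O(1)$ (as $[s_{kp}=0]$ is cohomologous to $c_1(L_{kp},h_{kp})$ and, by Assumption 2, $\int_X\omega_k\wedge\omega^{n-1}=O(1)$), and $\mu_{jp}(X)=0$ by Stokes. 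The decisive observation is a two-sided control of $u_{jp}$. From the extremal characterization of the Bergman kernel and Assumption 1, $\|s_{jp}(x)\|_{h_{jp}}^2\le B_{jp}(x)\le M_0A_{jp}^n$ for a unit section, whence the deterministic upper bound $u_{jp}\le \tfrac{n}{2}\tfrac{\log A_{jp}}{A_{jp}}+O(A_{jp}^{-1})$ holds everywhere on $X$. For the lower direction I use that $s_{jp}$ is independent of $\{s_{kp}\}_{k\ne j}$ under the product measure $\sigma_p$: conditioning on the other factors freezes $\mu_{jp}$, and since $\sigma_{jp}$ is moderate and $s_{jp}\mapsto\int_X\log\|s_{jp}\|_{h_{jp}}\,d\nu$ is quasi-plurisubharmonic, its lower tail is exponential, so that outside a set of $s_{jp}$ of measure $\le C(\sum_l A_{lp})^{-\alpha}$ one has $\int_X u_{jp}\,d\nu\ge -C\,\tfrac{\log(\sum_l A_{lp})}{\sum_l A_{lp}}\|\nu\|$ for every positive $\nu$ of bounded mass. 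Writing $\mu_{jp}=\mu_{jp}^+-\mu_{jp}^-$ with $\|\mu_{jp}^+\|=\|\mu_{jp}^-\|$ (as $\mu_{jp}(X)=0$) and combining the deterministic upper bound with the probabilistic lower bound yields
\begin{equation*}
|\langle u_{jp},\mu_{jp}\rangle|\le C\Big(\tfrac{\log A_{jp}}{A_{jp}}+\tfrac{\log(\sum_l A_{lp})}{\sum_l A_{lp}}\Big)\|\phi\|_{\mathscr{C}^2}
\end{equation*}
outside the bad set; here the first term comes from the Bergman upper bound and the second from the moderate lower tail.

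For the curvature piece $\langle\gamma_{jp}-\omega_j,\Psi_{jp}\rangle$ I invoke Assumption 2: after a standard regularization this is bounded by $\|\gamma_{jp}-\omega_j\|$ times the bounded mass of the companion factors, hence by $C A_{jp}^{-a_j}\|\phi\|_{\mathscr{C}^2}$. Defining $E_p^{\alpha}$ as the union over $j$ of the exceptional sets produced above and summing the three contributions over $j$ (using $A_{jp}\asymp\sum_l A_{lp}$, since all $A_{kp}$ have the same order) gives Theorem \ref{thm1.2}(ii), while Fubini together with the conditional tail bounds gives $\sigma_p(E_p^{\alpha})\le C_1(\sum_l A_{lp})^{-\alpha}$, which is (i). I expect the main obstacle to be precisely the justification of the singular pairings: one must know that the intersection currents $\Psi_{jp}$ and the measures $\mu_{jp}$, built from the random singular currents $T_{kp}$ and from the possibly singular $\gamma_{kp}$, are well defined, have uniformly bounded mass, and charge no pluripolar set, so that the moderate-measure concentration applies uniformly in $p$ and over the frozen configurations $\{s_{kp}\}_{k\ne j}$. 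This is where the well-definedness and positivity of all the mixed products $\omega_{j_1}\wedge\cdots\wedge\omega_{j_l}$ in Assumption 2 and the Bertini-type genericity of \cite{sh2} are essential; a secondary technical point is to make the independence/conditioning argument rigorous for the product moderate measure.
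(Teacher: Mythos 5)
Your route is genuinely different from the paper's: the paper never touches the mixed wedge products you build. It splits the error into three terms via the meromorphic transform $\Phi_p$ (the key inequality \eqref{e-7291}), handles the random term with the Dinh--Sibony concentration estimate (Theorem \ref{thm2.2}) applied with $\varepsilon_p\sim\log(\sum_kA_{kp})/\sum_kA_{kp}$, the middle term with Theorem \ref{thm2.3}, and the deterministic term with Proposition \ref{pro3.1} --- where, crucially, the telescoping involves only the Fubini--Study currents $\gamma_{kp}$, which are pullbacks of $\omega_{FS}$ under everywhere-defined Kodaira maps (Assumption 1 forces base-point freeness and $\log B_{kp}$ bounded), so all wedge products there are classical. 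Your telescoping instead produces the currents $\omega_1\wedge\cdots\wedge\omega_{j-1}\wedge T_{j+1,p}\wedge\cdots\wedge T_{mp}$ mixing the limit currents $\omega_k$ with the random divisor currents $T_{kp}=A_{kp}^{-1}[s_{kp}=0]$. This is a genuine gap: Assumption 2 only guarantees that products of the $\omega_k$ among themselves are well defined, and the Bertini-type genericity of \cite{sh2} only handles intersections of the divisors among themselves. Nothing in the hypotheses gives well-definedness, cohomological (hence uniformly bounded) masses, or the Stokes/integration-by-parts identity for the mixed products $\omega_i\wedge[s_{kp}=0]$ --- e.g.\ the potentials of $\omega_i$ could be $-\infty$ along a divisor $[s_{kp}=0]$ charges, and "generic in $s_{kp}$" would itself require a new Bertini-type argument relative to the polar sets of the $\omega_i$. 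Since every subsequent step of your proof (the measures $\mu_{jp}$, $\nu_{jp}$, their mass bounds, the Fubini conditioning) is built on these objects, this is not a technical footnote but the load-bearing wall; the paper's detour through $\Phi_p^{\ast}$ on $\mathbb{X}_p$ exists precisely to avoid it.

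Two further steps are defective as stated. First, your claim that outside a single exceptional set of $s_{jp}$ one has $\int_Xu_{jp}\,d\nu\geq-C\log(\sum_lA_{lp})/(\sum_lA_{lp})\,\|\nu\|$ \emph{for every} positive $\nu$ of bounded mass is false: given any $s_{jp}$, a unit-mass $\nu$ concentrated near the zero divisor of $s_{jp}$ makes $\int u_{jp}\,d\nu$ arbitrarily negative (even $-\infty$). The tail bound can only hold for each fixed $\nu$, with an exceptional set depending on $\nu$; your conditional/Fubini device can repair this because $\nu_{jp}$ is independent of $s_{jp}$ under the product measure, but then (ii) holds only for the specific frozen measures and you must separately run the domination $\pm dd^c\phi\leq\|\phi\|_{\mathscr{C}^2}\,\omega^{n-m+1}$ to recover uniformity in $\phi$ --- none of which is in your write-up. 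Second, "$\sigma_{jp}$ is moderate, hence the lower tail is exponential" with constants uniform in $p$ is not available for free: the moderate constants of the perturbed measures \eqref{e-7281} degrade with the dimension $d_{kp}\approx A_{kp}^n$ (this is why \cite{sh1} needs $c_p\leq 1/c^{(\sum_kA_{kp})^n}$), and the uniform tail estimate with the $r(\ell_1,\dots,\ell_m)$ and $\ell^{\xi}$ dependence that actually yields $\sigma_p(E_p^{\alpha})\leq C_1(\sum_kA_{kp})^{-\alpha}$ for arbitrary $\alpha$ is exactly Proposition \ref{pro2.9} (i.e.\ \cite[Proposition 3.14]{sh2}), which you would have to invoke or reprove. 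So even granting the wedge products, your argument does not yet establish (i).
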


In \cite{clmm}, $L_{kp}=L_p, \omega_k=\omega_1$ and $h_{kp}=h_p$ are smooth metrics. In this case, Assumption 1 is automatically satisfied under Assumption 2. In fact, they gave the asymptotic expansion of the Bergman kernel $B_{kp}(x)=B_p(x)$. Our theorem is a generalization of \cite[Theorem 0.4]{clmm}.
If $L_{kp}=L_k^{\otimes p}, \{L_k\}_{k=1}^m$ are all big line bundles and the non-continuous set of $h_{kp}=h_k^{\otimes p}$ are in general position and of H\"{o}lder continuous with singularities, our theorems recover \cite[Theorem 1.2, Theorem 1.3]{sh2} partially.

In the classical setting of equidistribution theorems, there is a single sequence $\{L_p\}$ of a positive line bundle $L$. Assumption 1 is satisfied due to the uniform estimate of $B_p(x)$ for $H^{0}(X,L^p)$, then we derive the classical result by Shiffman-Zelditch \cite{sz}.

\begin{corollary}\label{cor1.3}
Let $m=1, \{L_{p}=L^{\otimes p}\}$ be a sequence of high powers of a positive line bundle. Take $\omega=c_1(L,h)$, where $h$ is a smooth Hermitian metric, $\sigma_p$ is the Fubini-Study volume on $\mathbb{CP}H^{0}(X,L_p)$. Then, for $\sigma$-a.e. $\{s_p\}\in\mathbb{P}^{X}$, $\frac{1}{p}[s_p=0]\rightarrow \omega$ in the sense of currents.
\end{corollary}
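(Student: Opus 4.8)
The plan is to realize Corollary \ref{cor1.3} as the special case $m=1$, $A_{1p}=p$ of Theorem \ref{thm1.1}, so that the entire task reduces to checking that the two standing assumptions hold in this classical setting. I would first dispose of Assumption 2, which in fact holds with \emph{zero} error: since $h_{1p}=h^{\otimes p}$ is smooth and $c_1(L^{\otimes p},h^{\otimes p})=p\,c_1(L,h)=p\omega$, we have $\tfrac{1}{p}c_1(L_{1p},h_{1p})-\omega=0$, whence $\|\tfrac{1}{A_{1p}}c_1(L_{1p},h_{1p})-\omega_1\|=0\le C_0 p^{-a_1}$ for $\omega_1:=\omega$ and any constants $C_0,a_1>0$. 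The positivity of $L$ guarantees that $\omega=c_1(L,h)$ is a K\"ahler form, hence a positive closed $(1,1)$-current, and $\int\omega\wedge\omega^{n-1}=\int\omega^n>0$ shows it has positive measure. Because $m=1$, there are no nontrivial wedge products $\omega_{j_1}\wedge\cdots\wedge\omega_{j_l}$ to control, so Assumption 2 is satisfied.

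Next I would verify Assumption 1 with $A_{1p}=p$; this is the one place where the positivity of $(L,h)$ is used quantitatively. By the Bergman kernel asymptotic expansion for high powers of a positive line bundle (Tian, Catlin, Zelditch, Ma--Marinescu), one has $B_p(x)=p^n\big(b_0(x)+b_1(x)p^{-1}+\cdots\big)$ uniformly in $x\in X$, with leading coefficient $b_0(x)=\det\!\big(\dot R^L/2\pi\big)$ strictly positive and continuous. Since $X$ is compact, $b_0$ is bounded above and below by positive constants, so there exist $M_0>1$ and $p_0>0$ with $p^n/M_0\le B_p(x)\le M_0 p^n$ for all $x\in X$ and $p\ge p_0$. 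With $A_{1p}=p$ this is exactly Assumption 1, and the requirement that the $A_{kp}$ share a common order of infinity is vacuous for a single sequence.

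It then remains to record that the chosen $\sigma_p$ fits the hypotheses on the probability measure. The Fubini--Study volume form on $\mathbb{CP}H^0(X,L_p)$ is the standard measure, in particular a moderate measure carrying no mass on any proper analytic subset, so the well-definedness of $[s_p=0]$ and the measure-theoretic conditions invoked in Theorem \ref{thm1.1} are met. With Assumptions 1 and 2 in force, Theorem \ref{thm1.1} yields $\tfrac{1}{A_{1p}}[s_p=0]=\tfrac{1}{p}[s_p=0]\to\omega_1=\omega$ in the sense of currents for $\sigma$-a.e. $\{s_p\}\in\mathbb{P}^X$, which is the assertion.

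I expect the only nonroutine ingredient to be the uniform two-sided Bergman kernel bound underlying Assumption 1; this is not reproved here but quoted from the standard near-diagonal expansion, and it is the sole step where the positivity hypothesis on $L$ does genuine work beyond the trivial verification of Assumption 2. Everything else is a matter of matching the normalizations $A_{1p}=p$ and $\omega_1=\omega$ so that the conclusion of the general theorem specializes to the Shiffman--Zelditch statement.
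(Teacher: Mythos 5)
Your proposal is correct and follows exactly the route the paper intends: the paper justifies Corollary \ref{cor1.3} by the remark immediately preceding it, namely that Assumption 1 holds by the uniform Bergman kernel estimate for high powers of a positive line bundle and that the conclusion then follows from Theorem \ref{thm1.1}, while Assumption 2 is trivially satisfied since $\tfrac{1}{p}c_1(L^{\otimes p},h^{\otimes p})=\omega$ exactly. Your write-up merely makes these verifications (including the moderateness of the Fubini--Study volume, with $A_{1p}=p$ and $\omega_1=\omega$) explicit, so it matches the paper's argument in substance.
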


Assumption 1 is indeed strong. We next provide a dimension growth result for sequences of pseudo-effective line bundles, which can shed a light on this assumption. To simplify, let $(L_{1p}, h_{1p})=(L_{p}, h_{p})$, where all $L_p$ are pseudo-effective line bundles, i.e. $c_1(L_p,h_p)\geq 0$ in the sense of currents. $A_p>0, \lim\limits_{p\rightarrow \infty}A_p=+\infty$ and $\frac{1}{A_p}c_1(L_p,h_p)\rightarrow \omega_1$.
Suppose that $h_p$ is continuous on $X\backslash \Sigma$, where $\Sigma$ is a proper analytic subset, $c_1(L_p,h_p)\geq A_p\eta_p \omega$, where $\eta_p:X\rightarrow [0,+\infty)$. For any $x\in X\backslash \Sigma$, there exists a neighborhood $U_x$ of $x$ and a constant $c_x>0$ such that $\eta_p(x)\geq c_x$ on $U_x$ for any $p$ large.

\begin{theorem}\label{thm1.4}
With the above setting, there exists a constant $C_3>0$, we have $\dim H_{(2)}^0(X,L_p)\geq C_3 A_p^n$.
\end{theorem}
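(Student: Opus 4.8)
The plan is to bound $\dim H_{(2)}^0(X,L_p)$ from below through the Bergman kernel function, using the trace identity $\dim H_{(2)}^0(X,L_p)=\int_X B_p(x)\,\frac{\omega^n}{n!}$ together with the extremal characterization $B_p(x)=\sup\{|s(x)|^2_{h_p}:s\in H_{(2)}^0(X,L_p),\ \|s\|=1\}$. It therefore suffices to exhibit, for every $x$ in a fixed open set, a single holomorphic $L^2$ section strongly peaked at $x$, and to deduce $B_p(x)\geq c\,A_p^n$ there; integrating over that set then yields the claim with $C_3$ proportional to its volume.

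First I would fix an open set $\Omega\Subset X\setminus\Sigma$ with $\volume(\Omega)>0$. Since each $x$ has a neighbourhood $U_x$ and a constant $c_x>0$ with $\eta_p\geq c_x$ on $U_x$ for $p$ large, a compactness argument on $\overline\Omega$ produces a uniform $c>0$ and $p_1$ such that $c_1(L_p,h_p)\geq A_p c\,\omega$ on $\Omega$ for all $p\geq p_1$. On $\Omega$ the metrics are continuous and the curvature is strictly positive, which is exactly the input needed to run H\"ormander--Demailly $L^2$ estimates for $\dbar$ with the weight $h_p$.

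The core is a peak-section construction at scale $r=R A_p^{-1/2}$, where $R$ is a large constant fixed independently of $p$. Fix $x\in\Omega$, choose local coordinates and a local holomorphic frame $e_p$, and normalise the frame by a holomorphic factor so that the local weight $\varphi_p$ (with $\|e_p\|^2_{h_p}=e^{-\varphi_p}$) has vanishing gradient and vanishing holomorphic Hessian at $x$; the curvature lower bound then forces $\varphi_p$ to behave like the model Gaussian weight $A_p c\,|z-x|^2$. Let $\chi$ be a cut-off equal to $1$ on $B(x,r)$ and supported in $B(x,2r)$, set $\sigma=\chi\,e_p$, and solve $\dbar u=\dbar\sigma$ with the estimate $\|u\|^2\leq \frac{C}{A_p}\int|\dbar\chi|^2_{h_p}\,dV$. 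Since $|\dbar\chi|\sim r^{-1}=A_p^{1/2}/R$ on the annulus and the weight decays there like $e^{-cR^2}$, the substitution $w=\sqrt{A_p}(z-x)$ gives $\|u\|^2\leq C R^{2n-2}e^{-cR^2}A_p^{-n}$, while $\|\sigma\|^2\geq c'A_p^{-n}$; choosing $R$ large makes $\|u\|$ a small fraction of $\|\sigma\|$. Then $s_x=\sigma-u$ is a nonzero element of $H_{(2)}^0(X,L_p)$ with $\|s_x\|^2\leq C_0 A_p^{-n}e^{-\varphi_p(x)}$, and since $u$ is holomorphic on $B(x,r)$ the sub-mean value inequality bounds $|u(x)|$, giving $|s_x(x)|^2_{h_p}\geq c_0 e^{-\varphi_p(x)}$. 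Hence $B_p(x)\geq |s_x(x)|^2_{h_p}/\|s_x\|^2\geq (c_0/C_0)A_p^n$ for all $x\in\Omega$, and integrating over $\Omega$ completes the proof.

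The main obstacle is the uniform, $p$-independent control of $\varphi_p$ on the shrinking balls $B(x,2r)$: every estimate above implicitly needs $e^{\varphi_p(x)-\varphi_p}$ to stay bounded on $B(x,2r)$, i.e. $\varphi_p$ must not dip far below its value at the centre, uniformly in both $x\in\Omega$ and $p$. Since $\varphi_p$ is merely continuous and plurisubharmonic, this comparability does not follow from continuity alone; I would extract it from the curvature normalisation $\frac{1}{A_p}c_1(L_p,h_p)\to\omega_1$ and the two-sided control it provides after subtracting the model quadratic, reducing $\varphi_p$ on each ball to a Gaussian profile up to a uniformly bounded plurisubharmonic remainder. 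Establishing this comparability with constants independent of $x$ and $p$, and checking that the normalising frame change can be chosen with uniformly bounded jets, is the technical heart of the argument; once it is in place, the remaining $L^2$ and mean-value estimates are routine.
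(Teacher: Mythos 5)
Your overall strategy --- bound $B_p$ pointwise from below on a fixed open set $\Omega\Subset X\setminus\Sigma$ and integrate the trace formula $\dim H_{(2)}^0(X,L_p)=\int_X B_p\,\omega^n/n!$ --- is legitimate in principle, but it contains a genuine gap, and it is exactly the one you flag in your last paragraph. Every estimate in your peak-section construction at scale $r=RA_p^{-1/2}$ (the upper bound on $\|\sigma\|^2$, the Gaussian decay $e^{-cR^2}$ on the annulus, the sub-mean-value bound on $|u(x)|$) requires the local weight $\psi_p$ of $h_p$ to stay within a $p$-independent distance of its value at the centre on the shrinking ball $B(x,2r)$. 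This does not follow from the hypotheses of Theorem \ref{thm1.4}: $h_p$ is only \emph{continuous} on $X\setminus\Sigma$, and the weak convergence $\frac{1}{A_p}c_1(L_p,h_p)\to\omega_1$ that you invoke controls only the \emph{mass} of $dd^c\psi_p$, never the pointwise oscillation of $\psi_p$ at scale $A_p^{-1/2}$. Concretely, local weights of the form $\psi_p=cA_p|z|^2+\max\bigl(N_p\log|z-z_p|,-M_p\bigr)$ with $|z_p-x|<A_p^{-1/2}$, $n<N_p=o(A_p)$ and $M_p/N_p\to\infty$ are continuous, satisfy $dd^c\psi_p\geq cA_p\,dd^c|z|^2$, and have normalized perturbation mass $O(N_p/A_p)\to 0$ (so they are invisible in the weak limit), yet they dip below the central value by $M_p\to\infty$ inside your ball; then $\int_{B(x,2r)}e^{-2\psi_p}$ exceeds $A_p^{-n}e^{-2\psi_p(x)}$ by an unbounded factor and your section $\sigma-u$ has uselessly large norm. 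So the ``uniformly bounded plurisubharmonic remainder'' you hope to extract does not exist, and the frame normalisation you use (``vanishing gradient and holomorphic Hessian at $x$'') is not even defined for a merely continuous weight. In effect your route tries to establish Assumption 1 (the pointwise Bergman bound) on an open set --- precisely the statement the paper calls ``indeed strong'' and which Theorem \ref{thm1.4} is designed to circumvent.

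The paper's proof avoids fine-scale control of $\psi_p$ altogether by working at a \emph{single} point $z$ and a \emph{fixed} scale $r_0$. One twists the metric by $e^{-bA_p\varphi_z}$, where $\varphi_z$ is a cut-off multiple of $\log(|y-z|/r_0)$, choosing $b$ so the curvature stays $\geq 0$ on $X$ and $\geq aA_p\omega$ near $\overline V$; Theorem \ref{thm5.1} (H\"ormander--Demailly with singular weight) then solves $\dbar u_{z,p,\beta}=\dbar\bigl(v_{z,p,\beta}\,\theta\,e_p\bigr)$ for every monomial $v_{z,p,\beta}$ of degree at most $[bA_p]-n$. Here only \emph{finiteness} of the weighted $L^2$ norm of the datum is needed, which holds because $\varphi_z$ is bounded on the annulus and $\psi_p$ is continuous there --- no uniformity in $p$, no norm estimates on the resulting sections. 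The log pole of strength $bA_p$ forces $u_{z,p,\beta}$ to vanish to order $\geq[bA_p]-n+1$ at $z$, so $S_{z,p,\beta}=v_{z,p,\beta}\theta e_p-u_{z,p,\beta}$ has prescribed $([bA_p]-n)$-jet, the jet map is surjective, and $\dim H_{(2)}^0(X,L_p)\geq\binom{[bA_p]}{n}\geq C_3A_p^n$. If you want to salvage your argument, you must either strengthen the hypotheses (uniform $\mathscr{C}^2$ control of the weights, as in the smooth expansions of \cite{clmm}) or replace the peak-section step by this jet-surjectivity mechanism, which trades pointwise kernel bounds for a pure dimension count.
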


In the rest of the paper, we abusively use $C$ to denote positive constants, where $C$ is not necessary to be the same in different places.
The paper is organized as follows. In Section 2 we recall Dinh-Sibony's technique and the notion of moderate measure
with estimates for capacities in multi-projective spaces.
In Section 3 we prove a variant of convergence result of Fubini-Study currents for multi-sequences of holomorphic line bundles.
Section 4 is devoted to proving the main theorem. We conclude Section 5 with a dimension growth estimate for pseudo-effective line bundles.

\section{Preliminaries}
\subsection{Dinh-Sibony's technique}
Let $(X, \omega)$ (resp. $(Y, \omega_{Y})$) be a compact K\"{a}hler manifold of dimension $n$ (resp. $n_{Y}$).
Recall that a $meromorphic$ $transform$ $F: X\rightarrow Y$ is the graph of an analytic subset $\Gamma\subset X\times Y$
of pure dimension $n_{Y}+k$ such that the natural projections $\pi_{1}: X\times Y\rightarrow X$
and $\pi_{2}: X\times Y\rightarrow Y$ restricted to each irreducible component of the analytic subset $\Gamma$ are surjective.
We write $F=\pi_{2}\circ (\pi_{1}|_{\Gamma})^{-1}$.
The dimension of the fiber $F^{-1}(y):=\pi_{1}(\pi_{2}^{-1}|_{\Gamma}(y))$ is equal to $k$
for the point $y\in Y$ generic. This is the codimension of the meromorphic transform $F$.
If $T$ is a current of bidegree $(l,l)$ on $Y$, $n_{Y}+k-n\leq l\leq n_{Y}$, we define
\begin{equation}\label{e-7283-1}
F^{\star}(T):=(\pi_{1})_{\ast}(\pi_{2}^{\ast}(T)\wedge [\Gamma]),
\end{equation}
where $[\Gamma]$ is the current of integration over $\Gamma$.
We introduce the notations of intermediate degrees of $F$,
\begin{equation*}
\begin{split}
& \delta^1(F):=\int_{X}F^{\ast}(\omega_{Y}^{n_{Y}})\wedge\omega^{k}, \\
& \delta^2(F):=\int_{X}F^{\ast}(\omega_{Y}^{n_{Y}-1})\wedge\omega^{k+1}. \\
\end{split}
\end{equation*}

To introduce more notions and notations, we first recall the following lemma in \cite[Proposition 2.2]{ds1}.
\begin{lemma}\label{lem2.1}
There exists a constant $r>0$ such that for any positive closed current $T$ of bidegree $(1,1)$ with mass $1$ on $(X, \omega)$,
there is a smooth $(1,1)$-form $\alpha$ which depends only on the cohomology class of $T$ and a q.p.s.h. function $\varphi$
satisfying that
\begin{equation*}
-r\omega\leq\alpha\leq r\omega, \quad dd^{c}\varphi-T=\alpha.
\end{equation*}
\end{lemma}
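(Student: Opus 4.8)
The plan is to reduce the statement to linear algebra on the finite-dimensional cohomology $H^{1,1}(X,\R)$ and then invoke the $\ddc$-lemma. Since $T$ is positive and $d$-closed of bidegree $(1,1)$, it determines a Bott--Chern (equivalently de Rham, as $X$ is compact K\"{a}hler) class $\{T\}\in H^{1,1}(X,\R)$. First I would fix, once and for all, smooth closed $(1,1)$-forms $\beta_1,\dots,\beta_N$ whose classes form a basis of $H^{1,1}(X,\R)$, and define the \emph{linear} representative map $c\mapsto\beta_c:=\sum_i c_i\beta_i$ sending a class to a smooth closed form in that class. Setting $\theta:=\beta_{\{T\}}$ and $\alpha:=-\theta$, the form $\alpha$ is smooth, closed, and depends only on the cohomology class of $T$, as required.

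Next, because $T-\theta$ is a closed $(1,1)$-current cohomologous to zero, the $\ddc$-lemma on the compact K\"{a}hler manifold $X$ yields a real distribution $\varphi$ with $\ddc\varphi=T-\theta=T+\alpha$, that is, $\ddc\varphi-T=\alpha$. It remains to control $\alpha$ and to check that $\varphi$ is q.p.s.h. The mass of a positive closed current, $T\mapsto\int_X T\wedge\omega^{n-1}$, is a linear functional on $H^{1,1}(X,\R)$ that is strictly positive on the (closed) pseudoeffective cone; consequently its mass-one level set meets that cone in a compact convex subset $K\subset H^{1,1}(X,\R)$, and every class $\{T\}$ arising from a mass-one current lies in $K$. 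Since $c\mapsto\beta_c$ is linear and $K$ is compact, there is a constant $r>0$, depending only on $(X,\omega)$ and the fixed basis, such that $-r\omega\leq\beta_c\leq r\omega$ for all $c\in K$; in particular $-r\omega\leq\alpha\leq r\omega$. Finally, from $\ddc\varphi=T+\alpha$ with $T\geq0$ and $\alpha\geq-r\omega$ we get $\ddc\varphi\geq-r\omega$, whence $\varphi$ is (a.e.\ equal to) an $r\omega$-plurisubharmonic, i.e.\ q.p.s.h., function.

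The hard part will be the uniformity of $r$: it must be independent of $T$, and this is exactly what the compactness of $K$ delivers. That compactness rests on two ingredients that I would isolate carefully, namely the finite-dimensionality of $H^{1,1}(X,\R)$ and the \emph{strict} positivity of the mass functional on the pseudoeffective cone, so that the mass-one slice is bounded and not merely closed. A secondary technical point worth spelling out is the regularity upgrade from a distribution $\varphi$ with $\ddc\varphi$ bounded below to a genuine q.p.s.h.\ function; this is standard but should be invoked explicitly rather than taken for granted.
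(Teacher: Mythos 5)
Your proof is correct. Note that the paper itself gives no proof of this lemma---it is recalled verbatim from Dinh--Sibony \cite[Proposition 2.2]{ds1}---and your argument (finite-dimensionality of $H^{1,1}(X,\mathbb{R})$, a linear choice of smooth representatives, compactness of the mass-one slice of the pseudoeffective cone, then the $dd^{c}$-lemma for currents together with the regularity upgrade to a genuine q.p.s.h.\ function) is essentially the standard argument by which that proposition is proved there, so there is nothing to add beyond the two technical points you already flagged.
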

Denote by $r(X, \omega)$ the smallest $r$ in Lemma \ref{lem2.1}.
For example, $r(\mathbb{CP}^{N},\omega_{FS})=1$.
Consider a positive measure $\mu$ on $X$. $\mu$ is said to be a PLB measure if all q.p.s.h. functions are integrable with respect to $\mu$.
It is easy to see that all moderate measures are PLB.
Now given a PLB probability measure $\mu$ on $X$ and $t\in\mathbb{R}$, we define,
\begin{equation}\label{e-7283-2}
\begin{split}
Q(X, \omega):&=\{\varphi ~q.p.s.h. ~on~ X, dd^{c}\varphi\geq -r(X,\omega)\omega\}, \\
R(X, \omega, \mu):&=\sup\{\max_{X}\varphi: \varphi\in Q(X, \omega), \int_{X}\varphi d\mu=0\} \\
&=\sup\{-\int_{X}\varphi d\mu: \varphi\in Q(X, \omega), \max_{X}\varphi=0\}, \\
S(X, \omega, \mu):&=\sup\{\bigl|\int\varphi d\mu \bigr|: \varphi\in Q(X, \omega), \int_{X}\varphi\omega^{n}=0\}, \\
\Delta(X, \omega, \mu, t):&=\sup\{\mu(\varphi<-t): \varphi\in Q(X, \omega), \int_{X}\varphi d\mu=0\}. \\
\end{split}
\end{equation}
These constants are related to Alexander-Dinh-Sibony capacity, see \cite[A.2]{ds1}.

Let $\Phi_{p}$ be a sequence of meromorphic transforms from a projective manifold $(X, \omega)$ into
the compact K\"{a}hler manifolds $(\mathbb{X}_{p}, \omega_{p})$ of the same codimension $k$, where
$\mathbb{X}_{p}$ is defined in \eqref{e-7283}.
Let
\begin{equation*}
d_{p}=d_{1,p}+...+d_{m,p}
\end{equation*}
be the dimension of $\mathbb{X}_{p}$.
Consider a PLB probability measure $\mu_{p}$ on $\mathbb{X}_{p}$,
for every $p>0, \epsilon>0$, we define
\begin{equation}\label{e-7283-3}
E_{p}(\epsilon):=\bigcup_{\|\phi\|_{\mathscr{C}^{2}}\leq 1}\{s_{p}\in\mathbb{X}_{p}:
\bigl|\bigl<\Phi_{p}^{\ast}(\delta_{s_{p}})-\Phi_{p}^{\ast}(\mu_{p}), \phi\bigr>\bigr|\geq \delta^1(\Phi_{p})\epsilon\},
\end{equation}
where $\delta_{s_{p}}$ is the Dirac measure at the point $s_{p}$.
By the definition of the pullback of $\Phi_{p}$ on currents, we see that $\Phi_{p}^{\ast}(\delta_{s_{p}})$
and $\Phi_{p}^{\ast}(\mu_{p})$ are positive closed currents of bidimension $(k,k)$ on $X$.
Moreover, $\Phi_{p}^{\ast}(\delta_{s_{p}})$ is well-defined for $s_{p}\in\mathbb{X}_{p}$ generic.

Recall that $\omega_{Mp}$ and $c_p$ were defined in \cite[(18),(19)]{sh2}.
The following estimate from Dinh-Sibony equidistribution theorem \cite{ds1} is crucial in our paper.
\begin{theorem}\label{thm2.2}
Let $\eta_{\epsilon, p}:=\epsilon\delta^2(\Phi_{p})^{-1}\delta^1(\Phi_{p})-3R(\mathbb{X}_{p}, \omega_{Mp}, \mu_{p})$,
then
\begin{equation*}
\mu_{p}(E_{p}(\epsilon))\leq\Delta(\mathbb{X}_{p}, \omega_{Mp}, \mu_{p}, \eta_{\epsilon, p}).
\end{equation*}
\end{theorem}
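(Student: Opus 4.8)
The plan is to reproduce the argument of Dinh--Sibony \cite{ds1}: encode the pairing of $\Phi_{p}^{\ast}(\delta_{s_{p}})-\Phi_{p}^{\ast}(\mu_{p})$ against test forms by a quasi-plurisubharmonic (q.p.s.h.) function on $\mathbb{X}_{p}$, normalized by the intermediate degree $\delta^{2}(\Phi_{p})$, and then read off the measure of $E_{p}(\epsilon)$ from the capacitary quantities $R$ and $\Delta$ attached to $(\mathbb{X}_{p},\omega_{Mp},\mu_{p})$. Fix $\phi$ with $\|\phi\|_{\mathscr{C}^{2}}\leq 1$ and set $\psi_{\phi}(s):=\langle\Phi_{p}^{\ast}(\delta_{s}),\phi\rangle$ for $s\in\mathbb{X}_{p}$. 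By the definition \eqref{e-7283-1} and adjunction between pushforward and pullback, $\psi_{\phi}=(\pi_{2})_{\ast}([\Gamma]\wedge\pi_{1}^{\ast}\phi)$ is a $(0,0)$-current (a function) on $\mathbb{X}_{p}$; since $[\Gamma]$ is closed and $dd^{c}$ commutes with the proper pushforward $(\pi_{2})_{\ast}$,
\begin{equation*}
dd^{c}\psi_{\phi}=(\pi_{2})_{\ast}\bigl([\Gamma]\wedge\pi_{1}^{\ast}(dd^{c}\phi)\bigr).
\end{equation*}

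Next I would derive a two-sided curvature estimate. As $\|\phi\|_{\mathscr{C}^{2}}\leq 1$, the real $(k+1,k+1)$-form $dd^{c}\phi$ pairs with strongly positive currents like $\pm\,dd^{c}\phi\leq C\omega^{k+1}$, because the strongly positive cone is open and contains $\omega^{k+1}$ in its interior. Wedging the displayed identity against $[\Gamma]\wedge\pi_{2}^{\ast}\beta$ for an arbitrary positive $(1,1)$-form $\beta$ on $\mathbb{X}_{p}$ gives $-C\,\Theta_{p}\leq dd^{c}\psi_{\phi}\leq C\,\Theta_{p}$, where $\Theta_{p}:=(\pi_{2})_{\ast}([\Gamma]\wedge\pi_{1}^{\ast}\omega^{k+1})$ is a positive closed $(1,1)$-current whose mass equals $\int_{X}\Phi_{p}^{\ast}(\omega_{Mp}^{n_{Y}-1})\wedge\omega^{k+1}$, hence is comparable to $\delta^{2}(\Phi_{p})$. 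Applying Lemma \ref{lem2.1} to $\Theta_{p}$ (normalized to mass one) writes $\Theta_{p}=dd^{c}\rho_{p}+\alpha_{p}$ with $\rho_{p}$ independent of $\phi$ and $\alpha_{p}$ smooth satisfying $-r(\mathbb{X}_{p},\omega_{Mp})\,\|\Theta_{p}\|\,\omega_{Mp}\leq\alpha_{p}$. Consequently, once $\omega_{Mp}$ is chosen as in \cite{sh2} so as to absorb the constant $C$, the function $\varphi_{\phi}:=\delta^{2}(\Phi_{p})^{-1}\psi_{\phi}+\delta^{2}(\Phi_{p})^{-1}\rho_{p}$ lies in $Q(\mathbb{X}_{p},\omega_{Mp})$; applying the same bound to $-\phi$ shows that the estimate is genuinely two-sided.

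Now I would pass to $\mu_{p}$-mean-zero representatives. Since $\mu_{p}$ is PLB and $s\mapsto\Phi_{p}^{\ast}(\delta_{s})$ is affine, $\int_{\mathbb{X}_{p}}\psi_{\phi}\,d\mu_{p}=\langle\Phi_{p}^{\ast}(\mu_{p}),\phi\rangle$, and because $\rho_{p}$ is independent of $\phi$ it cancels in the centred difference, so that $g_{\phi}:=\varphi_{\phi}-\int_{\mathbb{X}_{p}}\varphi_{\phi}\,d\mu_{p}$ satisfies $\langle\Phi_{p}^{\ast}(\delta_{s})-\Phi_{p}^{\ast}(\mu_{p}),\phi\rangle=\delta^{2}(\Phi_{p})\,g_{\phi}(s)$ with $g_{\phi}\in Q(\mathbb{X}_{p},\omega_{Mp})$ and $\int g_{\phi}\,d\mu_{p}=0$. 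Replacing $\phi$ by $-\phi$ (also of $\mathscr{C}^{2}$-norm at most $1$) identifies $E_{p}(\epsilon)$ with the union over all such $\phi$ of the lower-tail sets $\{\,g_{\phi}\leq-\epsilon\,\delta^{2}(\Phi_{p})^{-1}\delta^{1}(\Phi_{p})\,\}$. The decisive step is to dominate this uncountable union by a single q.p.s.h. function: the upper envelope $\sup_{\|\phi\|_{\mathscr{C}^{2}}\leq 1}g_{\phi}$, after upper semicontinuous regularization, again lies in $Q(\mathbb{X}_{p},\omega_{Mp})$, and comparing the maximum-, $\mu_{p}$-mean- and $\omega_{Mp}$-mean normalizations of \eqref{e-7283-2} lowers the effective threshold by at most $3R(\mathbb{X}_{p},\omega_{Mp},\mu_{p})$. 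Feeding the resulting mean-zero function into the definition of $\Delta$ and using that $t\mapsto\Delta(\cdot,t)$ is nonincreasing yields $\mu_{p}(E_{p}(\epsilon))\leq\Delta(\mathbb{X}_{p},\omega_{Mp},\mu_{p},\eta_{\epsilon,p})$ with $\eta_{\epsilon,p}=\epsilon\,\delta^{2}(\Phi_{p})^{-1}\delta^{1}(\Phi_{p})-3R(\mathbb{X}_{p},\omega_{Mp},\mu_{p})$.

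I expect the main obstacle to be the two points at which positivity and uniformity are delicate: first, justifying the curvature comparison $\pm\,dd^{c}\phi\leq C\omega^{k+1}$ in degree $k+1\geq 2$, where the order structure on $(q,q)$-forms is only the duality between weakly and strongly positive cones, so the estimate must be read against the positive current $[\Gamma]\wedge\pi_{2}^{\ast}\beta$ rather than pointwise; and second, reducing the exceptional set over all test forms to a single q.p.s.h. envelope and tracking the exact constant $3R(\mathbb{X}_{p},\omega_{Mp},\mu_{p})$ produced by the three competing normalizations of \eqref{e-7283-2}. The remaining ingredients -- the degree identities for $\delta^{1},\delta^{2}$ and the monotonicity of $\Delta$ -- are routine.
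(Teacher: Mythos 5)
The paper gives no proof of Theorem \ref{thm2.2} at all: it is quoted from Dinh--Sibony \cite{ds1}, so your attempt must be measured against their original argument, which you are implicitly reconstructing. Your setup is indeed theirs --- the adjunction $\psi_{\phi}(s)=\langle\Phi_{p}^{\ast}(\delta_{s}),\phi\rangle=(\pi_{2})_{\ast}([\Gamma]\wedge\pi_{1}^{\ast}\phi)(s)$, the two-sided bound $\pm\,dd^{c}\psi_{\phi}\leq C\,\Theta_{p}$ with $\Theta_{p}=(\pi_{2})_{\ast}([\Gamma]\wedge\pi_{1}^{\ast}\omega^{k+1})$ of mass $\delta^{2}(\Phi_{p})$, and the potential $\rho_{p}$ supplied by Lemma \ref{lem2.1} --- but the final, decisive steps contain two genuine errors. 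The first is the claim that $\rho_{p}$ ``cancels in the centred difference.'' Centering $\varphi_{\phi}$ by its $\mu_{p}$-mean subtracts only the \emph{constant} $\int\rho_{p}\,d\mu_{p}$; the function $\rho_{p}(s)$ itself survives. The correct identity (up to the normalizing constants you absorb into $\omega_{Mp}$) is $\langle\Phi_{p}^{\ast}(\delta_{s})-\Phi_{p}^{\ast}(\mu_{p}),\phi\rangle=\delta^{2}(\Phi_{p})\bigl(g_{\phi}(s)-\tilde{\rho}_{p}(s)\bigr)$, where $\tilde{\rho}_{p}:=\rho_{p}-\int\rho_{p}\,d\mu_{p}$: the pairing is a \emph{difference of two} mean-zero functions of $Q(\mathbb{X}_{p},\omega_{Mp})$, not a single one. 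Your claimed identity $\langle\cdots\rangle=\delta^{2}(\Phi_{p})\,g_{\phi}(s)$ cannot hold: every $g_{\phi}\in Q(\mathbb{X}_{p},\omega_{Mp})$ with $\int g_{\phi}\,d\mu_{p}=0$ satisfies $g_{\phi}\leq R(\mathbb{X}_{p},\omega_{Mp},\mu_{p})$ everywhere by the definition \eqref{e-7283-2}, so applying it to $\pm\phi$ would yield the deterministic bound $|\langle\Phi_{p}^{\ast}(\delta_{s})-\Phi_{p}^{\ast}(\mu_{p}),\phi\rangle|\leq\delta^{2}(\Phi_{p})R$ for \emph{every} generic $s$, i.e. $E_{p}(\epsilon)=\emptyset$ as soon as $\epsilon\,\delta^{1}/\delta^{2}>R$; this is false (sections whose zero divisors converge to a fixed divisor rather than to the expected limit witness the failure), and it would make the theorem vacuous rather than merely true.

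The second error is the envelope step, which goes the wrong way even inside your own formulation: $E_{p}(\epsilon)=\bigcup_{\phi}\{g_{\phi}\leq -t\}=\{\inf_{\phi}g_{\phi}\leq -t\}$, and an infimum of q.p.s.h.\ functions is not q.p.s.h.; the regularized upper envelope $\bigl(\sup_{\phi}g_{\phi}\bigr)^{\ast}$ controls only the \emph{intersection} $\bigcap_{\phi}\{g_{\phi}\leq-t\}$, not the union. The mechanism that actually closes the Dinh--Sibony proof is precisely the term your false cancellation discarded: the function $\tilde{\rho}_{p}$, which is independent of $\phi$. For $s\in E_{p}(\epsilon)$ with witnessing form $\phi$ (sign chosen so that the pairing is $\geq\delta^{1}(\Phi_{p})\epsilon$), the uniform bound $g_{\phi}(s)\leq R$ forces $\tilde{\rho}_{p}(s)\leq R-\epsilon\,\delta^{1}(\Phi_{p})/\delta^{2}(\Phi_{p})$. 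Hence $E_{p}(\epsilon)$ is contained in a sublevel set of the \emph{single} mean-zero q.p.s.h.\ function $\tilde{\rho}_{p}$, to which $\Delta$ applies directly; tracking the changes of normalization (max versus $\mu_{p}$-mean versus $\omega_{Mp}$-mean) is what produces the threshold $\epsilon\,\delta^{1}(\Phi_{p})/\delta^{2}(\Phi_{p})-3R$. In short: the union over test forms is absorbed by the inequality $g_{\phi}\leq R$ combined with the fixed potential $\tilde{\rho}_{p}$, not by an upper envelope of the $g_{\phi}$.
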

 We also need the following important estimate, which was deduced from \cite[Lemma 4.2(c),Proposition 4.3]{ds1}.
\begin{theorem}\label{thm2.3}
In the above setting, we have
\begin{equation*}
\bigl|\bigl<\delta^1(\Phi_{p})^{-1}(\Phi_{p}^{\ast}(\mu_{p})
-\Phi_{p}^{\ast}(\omega_{Mp}^{d_{p}})), \phi\bigr>\bigr|\leq
2S(\mathbb{X}_{p}, \omega_{Mp}, \mu_{p})\delta^2(\Phi_{p})\delta^1(\Phi_{p})^{-1}\|\phi\|_{\mathscr{C}^{2}}
\end{equation*}
for any $(k,k)$-form $\phi$ of class $\mathscr{C}^{2}$ on $X$.
\end{theorem}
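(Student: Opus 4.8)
The plan is to turn the pairing against $\phi$ into the integral of a single function over $\mathbb{X}_{p}$, and then to compare $\mu_{p}$ with $\omega_{Mp}^{d_{p}}$ through the constant $S$. Normalize $\omega_{Mp}$ so that $\int_{\mathbb{X}_{p}}\omega_{Mp}^{d_{p}}=1$; then $\mu_{p}$ and $\omega_{Mp}^{d_{p}}$ are probability measures and $\mu_{p}-\omega_{Mp}^{d_{p}}$ has total mass $0$. By homogeneity in $\phi$ it suffices to treat $\|\phi\|_{\mathscr{C}^{2}}\leq 1$. I would introduce the fibre-integral function
\[
u_{\phi}(y):=\bigl\langle\Phi_{p}^{\ast}(\delta_{y}),\phi\bigr\rangle=\int_{\Phi_{p}^{-1}(y)}\phi,\qquad y\in\mathbb{X}_{p},
\]
which is well defined for generic $y$, hence $\mu_{p}$- and $\omega_{Mp}^{d_{p}}$-a.e. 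By the projection formula, for $\nu=\mu_{p}$ and for $\nu=\omega_{Mp}^{d_{p}}$,
\[
\bigl\langle\Phi_{p}^{\ast}(\nu),\phi\bigr\rangle=\bigl\langle\nu,(\pi_{2})_{\ast}(\pi_{1}^{\ast}\phi\wedge[\Gamma_{p}])\bigr\rangle=\int_{\mathbb{X}_{p}}u_{\phi}\,d\nu,
\]
so that $\langle\delta^{1}(\Phi_{p})^{-1}(\Phi_{p}^{\ast}(\mu_{p})-\Phi_{p}^{\ast}(\omega_{Mp}^{d_{p}})),\phi\rangle=\delta^{1}(\Phi_{p})^{-1}\int_{\mathbb{X}_{p}}u_{\phi}\,d(\mu_{p}-\omega_{Mp}^{d_{p}})$, and the task is to bound this integral.

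Next, following \cite[Lemma 4.2(c)]{ds1}, I would control $\ddc u_{\phi}$. Since $[\Gamma_{p}]$ is closed, $\ddc u_{\phi}=(\pi_{2})_{\ast}(\pi_{1}^{\ast}(\ddc\phi)\wedge[\Gamma_{p}])$. Bounding $\pm\ddc\phi\leq\|\phi\|_{\mathscr{C}^{2}}\,\omega^{k+1}$ in the sense needed to wedge with the positive current $[\Gamma_{p}]$, one gets $\ddc u_{\phi}\geq -T_{\phi}$, where
\[
T_{\phi}:=(\pi_{2})_{\ast}(\pi_{1}^{\ast}(\omega^{k+1})\wedge[\Gamma_{p}])
\]
is a positive closed $(1,1)$-current on $\mathbb{X}_{p}$ of mass $\int_{\mathbb{X}_{p}}T_{\phi}\wedge\omega_{Mp}^{d_{p}-1}=\int_{X}\Phi_{p}^{\ast}(\omega_{Mp}^{d_{p}-1})\wedge\omega^{k+1}=\delta^{2}(\Phi_{p})$. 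Applying Lemma \ref{lem2.1} to $T_{\phi}/\delta^{2}(\Phi_{p})$ gives a q.p.s.h. $v$ and $\alpha$ with $-r\omega_{Mp}\leq\alpha\leq r\omega_{Mp}$ and $\ddc v-T_{\phi}/\delta^{2}(\Phi_{p})=\alpha$, where $r=r(\mathbb{X}_{p},\omega_{Mp})$. Then $v\in Q(\mathbb{X}_{p},\omega_{Mp})$, and from $\ddc u_{\phi}\geq -T_{\phi}$ one checks that $\psi_{1}:=\delta^{2}(\Phi_{p})^{-1}u_{\phi}+v$ also satisfies $\ddc\psi_{1}\geq -r\omega_{Mp}$, so $\psi_{1}\in Q(\mathbb{X}_{p},\omega_{Mp})$. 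This is exactly the decomposition of \cite[Proposition 4.3]{ds1}:
\[
u_{\phi}=\delta^{2}(\Phi_{p})\,(\psi_{1}-\psi_{2}),\qquad \psi_{2}:=v,\quad \psi_{1},\psi_{2}\in Q(\mathbb{X}_{p},\omega_{Mp}).
\]

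Finally I would integrate against the mass-zero signed measure $\mu_{p}-\omega_{Mp}^{d_{p}}$. Replacing each $\psi_{i}$ by $\psi_{i}-\int_{\mathbb{X}_{p}}\psi_{i}\,\omega_{Mp}^{d_{p}}$ changes $u_{\phi}$ only by an additive constant, which integrates to $0$ against $\mu_{p}-\omega_{Mp}^{d_{p}}$; so I may assume $\int_{\mathbb{X}_{p}}\psi_{i}\,\omega_{Mp}^{d_{p}}=0$. For such a normalized $\psi_{i}\in Q(\mathbb{X}_{p},\omega_{Mp})$ the definition of $S(\mathbb{X}_{p},\omega_{Mp},\mu_{p})$ gives $\bigl|\int\psi_{i}\,d\mu_{p}\bigr|\leq S(\mathbb{X}_{p},\omega_{Mp},\mu_{p})$, while $\int\psi_{i}\,\omega_{Mp}^{d_{p}}=0$, hence $\bigl|\int\psi_{i}\,d(\mu_{p}-\omega_{Mp}^{d_{p}})\bigr|\leq S(\mathbb{X}_{p},\omega_{Mp},\mu_{p})$. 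Summing the two contributions,
\[
\Bigl|\int_{\mathbb{X}_{p}}u_{\phi}\,d(\mu_{p}-\omega_{Mp}^{d_{p}})\Bigr|\leq 2\,S(\mathbb{X}_{p},\omega_{Mp},\mu_{p})\,\delta^{2}(\Phi_{p})
\]
for $\|\phi\|_{\mathscr{C}^{2}}\leq 1$. Restoring the $\mathscr{C}^{2}$-norm and dividing by $\delta^{1}(\Phi_{p})$ yields the claimed inequality.

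The main obstacle is the second step: making the bound $\ddc u_{\phi}\geq -\delta^{2}(\Phi_{p})\,\tilde T$ (with $\tilde T$ a mass-one positive closed $(1,1)$-current) fully rigorous. This requires the positivity comparison of the real $(k+1,k+1)$-form $\ddc\phi$ with $\omega^{k+1}$ against the positive current $[\Gamma_{p}]$, which is delicate in higher bidegree, together with the $L^{1}$-regularity needed to conclude that $u_{\phi}$ is genuinely a difference of two functions in $Q(\mathbb{X}_{p},\omega_{Mp})$ with the precise scaling $\delta^{2}(\Phi_{p})$ and the correct constant $r(\mathbb{X}_{p},\omega_{Mp})$ from Lemma \ref{lem2.1}. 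Once these normalizations are pinned down exactly as in \cite[Lemma 4.2(c)]{ds1} and \cite[Proposition 4.3]{ds1}, the factor $2$ and the ratio $\delta^{2}(\Phi_{p})\delta^{1}(\Phi_{p})^{-1}$ emerge as above.
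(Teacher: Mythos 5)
Your proposal is correct and takes essentially the same route as the paper: the paper proves Theorem \ref{thm2.3} only by citing \cite[Lemma 4.2(c), Proposition 4.3]{ds1}, and your argument is precisely a reconstruction of that cited proof --- the fibre-integral function $u_{\phi}(y)=\langle\Phi_{p}^{\ast}(\delta_{y}),\phi\rangle$, the bound $dd^{c}u_{\phi}\geq -\|\phi\|_{\mathscr{C}^{2}}T_{\phi}$ with $\|T_{\phi}\|=\delta^{2}(\Phi_{p})$, the splitting of $u_{\phi}$ into $\delta^{2}(\Phi_{p})(\psi_{1}-\psi_{2})$ with $\psi_{i}\in Q(\mathbb{X}_{p},\omega_{Mp})$ via Lemma \ref{lem2.1}, and the final estimate through $S(\mathbb{X}_{p},\omega_{Mp},\mu_{p})$ yielding the factor $2$ and the ratio $\delta^{2}(\Phi_{p})\delta^{1}(\Phi_{p})^{-1}$. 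The technical points you flag (the positivity comparison of $dd^{c}\phi$ with $\omega^{k+1}$ when wedging with $[\Gamma_{p}]$, and the validity of the projection formula for singular PLB measures $\mu_{p}$) are exactly what the cited results of Dinh--Sibony supply, so your outline matches the intended proof.
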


\begin{theorem}\label{thm2.4}
Suppose that the sequence $\{R(\mathbb{X}_{p}, \omega_{Mp}, \mu_{p})\delta^2(\Phi_{p})\delta^1(\Phi_{p})^{-1}\}$ tends to $0$ and
\begin{equation*}
\Sigma_{p\geq 1}\Delta(\mathbb{X}_{p}, \omega_{Mp}, \mu_{p}, \delta^2(\Phi_{p})^{-1}\delta^1(\Phi_{p})t)<\infty
\end{equation*}
for all $t>0$.
Then for almost everywhere $s=(s_{p})\in\mathbb{P}^{X}$ with respect to $\mu=\prod\limits_{p=1}^\infty\mu_p$, the sequence $\langle \delta^1(\Phi_{p})^{-1}(\Phi_{p}^{\ast}(\delta_{s_{p}})-\Phi_{p}^{\ast}(\mu_{p})), \phi \rangle$
converges to $0$ uniformly on the bounded set of $(k-1,k-1)$-forms on $X$ of class $\mathscr{C}^{2}$.
\end{theorem}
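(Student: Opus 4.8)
The plan is to run a Borel--Cantelli argument on top of the single estimate of Theorem \ref{thm2.2}, exploiting the fact that the exceptional set $E_p(\epsilon)$ of \eqref{e-7283-3} already absorbs the supremum over the unit ball of test forms, so that uniformity is built in and never has to be recovered by hand.

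First I would fix $\epsilon>0$ and bound from below the parameter $\eta_{\epsilon,p}=\epsilon\delta^2(\Phi_p)^{-1}\delta^1(\Phi_p)-3R(\mathbb{X}_p,\omega_{Mp},\mu_p)$ occurring in Theorem \ref{thm2.2}. Factoring out $\delta^2(\Phi_p)^{-1}\delta^1(\Phi_p)$ gives
\begin{equation*}
\eta_{\epsilon,p}=\delta^2(\Phi_p)^{-1}\delta^1(\Phi_p)\Bigl(\epsilon-3R(\mathbb{X}_p,\omega_{Mp},\mu_p)\delta^2(\Phi_p)\delta^1(\Phi_p)^{-1}\Bigr).
\end{equation*}
By the first hypothesis the bracketed quantity tends to $\epsilon$, so there is $p_\epsilon$ with $\eta_{\epsilon,p}\geq\tfrac{\epsilon}{2}\delta^2(\Phi_p)^{-1}\delta^1(\Phi_p)$ for all $p\geq p_\epsilon$. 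Since $t\mapsto\Delta(\mathbb{X}_p,\omega_{Mp},\mu_p,t)$ is non-increasing, as the set $\{\varphi<-t\}$ shrinks when $t$ grows, Theorem \ref{thm2.2} yields, for $p\geq p_\epsilon$,
\begin{equation*}
\mu_p(E_p(\epsilon))\leq\Delta(\mathbb{X}_p,\omega_{Mp},\mu_p,\eta_{\epsilon,p})\leq\Delta\bigl(\mathbb{X}_p,\omega_{Mp},\mu_p,\tfrac{\epsilon}{2}\delta^2(\Phi_p)^{-1}\delta^1(\Phi_p)\bigr).
\end{equation*}
Invoking the second hypothesis with $t=\epsilon/2$ then gives $\sum_p\mu_p(E_p(\epsilon))<\infty$.

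Next I would transfer this to the product space. Writing $\widetilde E_p(\epsilon)$ for the cylinder $\{s=(s_q)\in\mathbb{P}^X:s_p\in E_p(\epsilon)\}$, the product structure of $\mu=\prod_p\mu_p$ gives $\mu(\widetilde E_p(\epsilon))=\mu_p(E_p(\epsilon))$, so $\sum_p\mu(\widetilde E_p(\epsilon))<\infty$. The Borel--Cantelli lemma forces $\mu(\limsup_p\widetilde E_p(\epsilon))=0$, i.e.\ for $\mu$-a.e.\ $s$ there is $p_0(s,\epsilon)$ beyond which $s_p\notin E_p(\epsilon)$; by the very definition \eqref{e-7283-3} this means $|\langle\delta^1(\Phi_p)^{-1}(\Phi_p^\ast(\delta_{s_p})-\Phi_p^\ast(\mu_p)),\phi\rangle|<\epsilon$ simultaneously for every $\phi$ with $\|\phi\|_{\mathscr{C}^2}\leq1$. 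Finally I would apply this along $\epsilon_j=1/j$ and intersect the resulting countably many full-measure sets; on the still full-measure intersection, given any $\epsilon>0$ one picks $j$ with $1/j<\epsilon$ and concludes that the normalized pairing is eventually smaller than $\epsilon$ uniformly over $\|\phi\|_{\mathscr{C}^2}\leq1$, hence over every bounded family of $\mathscr{C}^2$ forms after rescaling.

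The argument is essentially assembly, so I do not anticipate a deep obstacle; the only point demanding care is the ordering of quantifiers. Uniformity over test forms is not something to establish separately after the fact — it is already encoded in $E_p(\epsilon)$ as a union over $\|\phi\|_{\mathscr{C}^2}\leq1$, so summability of $\mu_p(E_p(\epsilon))$ for each fixed $\epsilon$ is precisely the correct input, and the single genuinely quantitative step is securing the lower bound $\eta_{\epsilon,p}\geq\tfrac{\epsilon}{2}\delta^2(\Phi_p)^{-1}\delta^1(\Phi_p)$ from the first hypothesis before applying the monotonicity of $\Delta$.
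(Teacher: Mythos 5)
Your proof is correct and takes essentially the same route as the source: the paper states Theorem \ref{thm2.4} without proof, recalling it from Dinh--Sibony \cite{ds1}, where it is obtained exactly as you do --- by combining the exceptional-set estimate of Theorem \ref{thm2.2} with the monotonicity of $\Delta$ in $t$, the two hypotheses (the second applied with $t=\epsilon/2$), and the Borel--Cantelli lemma along $\epsilon_j=1/j$, the uniformity over $\|\phi\|_{\mathscr{C}^2}\leq 1$ being already built into the definition \eqref{e-7283-3} of $E_p(\epsilon)$. This is also the same scheme the paper itself uses when it applies Theorem \ref{thm2.2} and Borel--Cantelli at the end of the proof of Theorem \ref{thm1.2}.
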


Consider the $Kodaira$ $map$
\begin{equation*}
\Phi_{k,p}: X\rightarrow\mathbb{CP}(H_{(2)}^{0}(X, L_{kp})^{\ast}).
\end{equation*}
Here $H_{(2)}^{0}(X, L_{kp})^{\ast}$ is the dual space of $H_{(2)}^{0}(X, L_{kp}))$.
Choose $\{S_{k,p}^{j}\}_{j=0}^{d_{k,p}}$ as an orthonormal basis of $H_{(2)}^{0}(X, L_{kp}))$.
By an identification via the basis, it boils down to a meromorphic map
\begin{equation*}
\Phi_{k,p}: X\rightarrow \mathbb{CP}^{d_{kp}}.
\end{equation*}
Now we give a local analytic description of the above map.
Let $U\subset X$ be a contractible Stein open subset, $e_{kp}$ be a local holomorphic frame of $L_{kp}$ on $U$.
Then there exists a holomorphic function $s_{j}^{k,p}$ on $U$ such that $S_{k,p}^{j}=s_{j}^{k,p}e_{kp}$.
Then the map is expressed locally as
\begin{equation}\label{e-7283-4}
\Phi_{k,p}(x)=[s_{0}^{k,p}(x):...:s_{d_{k,p}}^{k,p}(x)], \quad \forall x\in U
\end{equation}
It is called the Kodaira map defined by the basis $\{S_{k,p}^{j}\}_{j=0}^{d_{k,p}}$.
Denote by $B_{kp}$ the Bergman kernel function defined by
\begin{equation}
B_{kp}(x)=\sum_{j=0}^{d_{k,p}}|S_{k,p}^{j}(x)|^{2}_{h_{k,p}}, \quad |S_{k,p}^{j}(x)|^{2}_{h_{k,p}}=h_{k,p}(S_{k,p}^{j}(x),S_{k,p}^{j}(x)).
\end{equation}
It is easy to see that this definition is independent of the choice of basis.

Recall that $\omega_{FS}$ is the normalized Fubini-Study form on $\mathbb{CP}^{d_{k,p}}$. We define the Fubini-Study
currents $\gamma_{k,p}$ of $H_{(2)}^{0}(X, L_{kp})$ as pullbacks of $\omega_{FS}$ by Kodaira map,
\begin{equation}\label{e-7283-5}
\gamma_{k,p}=\Phi_{k,p}^{\ast}(\omega_{FS}).
\end{equation}
We have in the local Stein open subset $U$,
\begin{equation*}
\gamma_{k,p}\bigl|_{U}=\frac{1}{2}dd^{c}\log\sum_{j=0}^{d_{kp}}|s_{j}^{k,p}|^{2}.
\end{equation*}
This yields
\begin{equation*}
\frac{1}{p}\gamma_{k,p}=c_{1}(L_{k},h_{k})+\frac{1}{2p}dd^{c}\log B_{kp}.
\end{equation*}
Since $\log B_{kp}$ is a global function which belongs to $L^{1}(X, \omega^{n})$,
$\frac{1}{p}\gamma_{k,p}$ has the same cohomology class as $c_{1}(L_{kp},h_{kp})$. We focus on the special meromorphic transforms $\Phi_{p}:X\rightarrow \mathbb{X}_p$ induced by the product map of Kodaira maps $ \Phi_{kp}: X\rightarrow \mathbb{CP}H_{(2)}^0(X,L_{kp})$. $\Phi_p$ is indeed a meromorphic transform with a graph
\begin{equation*}
\Gamma_{kp}=\{(x, s_{1p},\cdots,s_{mp})\in X\times \mathbb{X}_p: s_{1p}(x)=\cdots=s_{mp}(x)=0\},
\end{equation*}
see \cite[Section 3]{sh2}.
We also need the following
\begin{lemma}\label{lem2.5}
$\Phi_{p}^{\ast}(\delta_{s_{p}})=[s_{p}=0]$.
\end{lemma}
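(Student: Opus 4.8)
\textbf{Proof proposal for Lemma \ref{lem2.5}.}

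The plan is to unwind the definition \eqref{e-7283-1} of the pullback under a meromorphic transform and to identify each of the three operations geometrically. By definition,
\[
\Phi_{p}^{\star}(\delta_{s_{p}})=(\pi_{1})_{\ast}\bigl(\pi_{2}^{\ast}(\delta_{s_{p}})\wedge[\Gamma]\bigr),
\]
where $\Gamma$ is the graph of $\Phi_{p}$ described above and $\delta_{s_{p}}$ is the point mass at $s_{p}\in\mathbb{X}_{p}$, viewed as the integration current over the $0$-dimensional set $\{s_{p}\}$, of bidegree $(d_{p},d_{p})$. Since $\pi_{2}\colon X\times\mathbb{X}_{p}\to\mathbb{X}_{p}$ is a holomorphic submersion with fibre $\pi_{2}^{-1}(s_{p})=X\times\{s_{p}\}$, the first step is to record that the pullback of a point mass by a submersion is the integration current over the corresponding fibre, i.e. $\pi_{2}^{\ast}(\delta_{s_{p}})=[X\times\{s_{p}\}]$.

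Next I would compute the wedge product $[X\times\{s_{p}\}]\wedge[\Gamma]$. From the description of the graph, the set-theoretic intersection of the fibre with $\Gamma$ is
\[
(X\times\{s_{p}\})\cap\Gamma=Z_{s_{p}}\times\{s_{p}\},\qquad Z_{s_{p}}:=\{x\in X:\ s_{1p}(x)=\cdots=s_{mp}(x)=0\}.
\]
For $\sigma_{p}$-a.e.\ $s_{p}$ — precisely the genericity that makes $[s_{p}=0]$ well defined via the Bertini-type argument recalled from \cite[Section 2]{sh2}, valid because $\sigma_{p}$ carries no mass on proper analytic subsets of $\mathbb{X}_{p}$ — this intersection is proper and of multiplicity one, so the wedge product is the integration current $[Z_{s_{p}}\times\{s_{p}\}]$. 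Finally, $\pi_{1}$ restricted to $Z_{s_{p}}\times\{s_{p}\}$ is a biholomorphism onto $Z_{s_{p}}\subset X$, whence $(\pi_{1})_{\ast}([Z_{s_{p}}\times\{s_{p}\}])=[Z_{s_{p}}]$. Identifying $[Z_{s_{p}}]$ with $[s_{1p}=0]\wedge\cdots\wedge[s_{mp}=0]=[s_{p}=0]$ — again for generic $s_{p}$, since this is exactly the content of the definition of $[s_{p}=0]$ together with the proper intersection of the divisors $\{s_{kp}=0\}$ — closes the chain of identities and yields $\Phi_{p}^{\star}(\delta_{s_{p}})=[s_{p}=0]$.

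The main obstacle is the middle step: justifying that $\pi_{2}^{\ast}(\delta_{s_{p}})\wedge[\Gamma]$ is a well-defined positive closed current carrying the correct unit multiplicity. I would handle this through the Dinh-Sibony slicing formalism underlying \eqref{e-7283-1}: the current $\pi_{2}^{\ast}(\delta_{s_{p}})\wedge[\Gamma]$ is the slice $\langle[\Gamma],\pi_{2},s_{p}\rangle$, which exists and is supported on $\Gamma\cap\pi_{2}^{-1}(s_{p})$ for $s_{p}$ outside a set not charged by $\sigma_{p}$. Because $\Gamma$ is a graph over $X$, the generic fibre of $\pi_{2}|_{\Gamma}$ is reduced, so the slice equals $[Z_{s_{p}}\times\{s_{p}\}]$ exactly; the remaining pushforward by $\pi_{1}$ and the identification with $[s_{p}=0]$ are then formal, and one concludes by noting that both sides are well-defined closed currents agreeing for $\sigma_{p}$-a.e.\ $s_{p}$.
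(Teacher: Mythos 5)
Your proposal is correct, and note that the paper itself offers no proof of Lemma \ref{lem2.5}: it is stated as a known fact, with the graph $\Gamma_{kp}$ referred to \cite[Section 3]{sh2} and the companion statement, Proposition \ref{pro2.6}, simply cited from \cite{cmn}. Your unwinding of the definition \eqref{e-7283-1} --- pulling the Dirac mass back to $[X\times\{s_{p}\}]$, intersecting with $[\Gamma]$ to get $[Z_{s_{p}}\times\{s_{p}\}]$ for generic $s_{p}$, and pushing forward by $\pi_{1}$ --- is precisely the standard argument underlying the cited literature \cite{ds1,cmn,sh2}, and it mirrors the computation the paper itself performs in the proof of Lemma \ref{lem4.1} for $\Phi_{kp}^{\ast}([\mathcal{D}_{kp}])$. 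The two genuine issues (well-definedness of the wedge $\pi_{2}^{\ast}(\delta_{s_{p}})\wedge[\Gamma]$, handled by slicing, and unit multiplicity of the intersection, handled by the Bertini-type genericity valid $\sigma_{p}$-a.e.) are exactly the ones you flag and address correctly.
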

\begin{proposition}\label{pro2.6} \emph{\cite[Lemma 4.5]{cmn}}
$\Phi_{p}^{\ast}(\omega_{Mp}^{d_{p}})=\gamma_{1,p}\wedge...\wedge\gamma_{m,p}$ for all $p$ sufficiently large.
\end{proposition}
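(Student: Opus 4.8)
The plan is to reduce the multi-bundle transform pullback to an average of zero-divisors and then factor it through the product structure of $\mathbb{X}_p$. By the choice of $\omega_{Mp}$ in \cite[(18),(19)]{sh2}, its top self-intersection $\omega_{Mp}^{d_p}$ is the standard product probability measure $\sigma_p^{\mathrm{std}}=\nu_1\times\cdots\times\nu_m$ on $\mathbb{X}_p$, where $\nu_k=\omega_{FS}^{d_{k,p}}$ is the normalized Fubini-Study volume on the factor $\mathbb{CP}H_{(2)}^0(X,L_{kp})$. Writing this measure as a superposition of Dirac masses, $\omega_{Mp}^{d_p}=\int_{\mathbb{X}_p}\delta_{s_p}\,d\sigma_p^{\mathrm{std}}(s_p)$, and using that $\Phi_p^{\star}(\cdot)=(\pi_1)_{\ast}(\pi_2^{\ast}(\cdot)\wedge[\Gamma_p])$ is linear and weakly continuous, I would pass $\Phi_p^{\star}$ under the integral sign and invoke Lemma \ref{lem2.5} to obtain
\[
\Phi_p^{\star}(\omega_{Mp}^{d_p})=\int_{\mathbb{X}_p}\Phi_p^{\star}(\delta_{s_p})\,d\sigma_p^{\mathrm{std}}(s_p)=\int_{\mathbb{X}_p}[s_{1p}=0]\wedge\cdots\wedge[s_{mp}=0]\,d\nu_1(s_{1p})\cdots d\nu_m(s_{mp}).
\]

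The core step is to factor this integral. The graph $\Gamma_p$ is the fiber product over $X$ of the individual incidence graphs $\Gamma_{k,p}=\{(x,s_{kp}):s_{kp}(x)=0\}$, so each factor $[s_{kp}=0]$ depends only on the variable $s_{kp}$. Integrating one variable at a time against the product measure and applying the classical single-bundle expectation identity
\[
\int [s_{kp}=0]\,d\nu_k(s_{kp})=\Phi_{k,p}^{\star}(\nu_k)=\gamma_{k,p}
\]
(the $m=1$ case, which follows by commuting $dd^c$ with the $\nu_k$-average of $\tfrac12\log|s_{kp}|^2$, reproducing the local potential $\tfrac12\log\sum_j|s_j^{k,p}|^2$ of $\gamma_{k,p}$) would yield, after $m$ iterations,
\[
\int_{\mathbb{X}_p}\bigwedge_{k=1}^m[s_{kp}=0]\,d\sigma_p^{\mathrm{std}}=\gamma_{1,p}\wedge\cdots\wedge\gamma_{m,p},
\]
which is the desired equality.

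The hard part will be justifying the two interchanges of integration with the singular operations: moving $\Phi_p^{\star}$ (hence an implicit $dd^c$) under the integral, and peeling off one wedge factor at a time. Both require that the intersection $[s_{1p}=0]\wedge\cdots\wedge[s_{mp}=0]$ be well-defined for $\sigma_p^{\mathrm{std}}$-a.e. $s_p$ and that the partial wedge $\gamma_{1,p}\wedge\cdots\wedge\gamma_{k-1,p}\wedge[s_{kp}=0]\wedge\cdots\wedge[s_{mp}=0]$ remain admissible at each stage; this is precisely the Bertini-type transversality quoted from \cite[Section 2]{sh2}, which holds once $p$ is large enough for the linear systems $H_{(2)}^0(X,L_{kp})$ to define genuine Kodaira maps in general position, explaining the restriction to $p$ sufficiently large. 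The interchanges themselves are then legitimate because the local potentials $\tfrac12\log\sum_j|s_j^{k,p}|^2$ are plurisubharmonic with analytic singularities and uniformly $\nu_k$-integrable, so Fubini's theorem applies to the potentials and commutes with the Bedford-Taylor products.
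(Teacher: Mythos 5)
The paper itself offers no proof of Proposition \ref{pro2.6} --- it is quoted directly from \cite[Lemma 4.5]{cmn} --- and your argument reconstructs essentially the proof of that cited lemma: the normalizing constant $c_p$ built into $\omega_{Mp}$ is chosen exactly so that $\omega_{Mp}^{d_p}$ is the product Fubini--Study probability measure on $\mathbb{X}_p$, its pullback under the transform is the average of the currents $[s_p=0]=\Phi_p^{\star}(\delta_{s_p})$ from Lemma \ref{lem2.5}, and this average factors through the iterated single-factor Crofton formula $\int [s_{kp}=0]\,d\nu_k(s_{kp})=\gamma_{k,p}$, with the interchanges justified by Fubini applied to the plurisubharmonic potentials and by the Bertini-type general position statement (which is also where ``$p$ sufficiently large'' enters). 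Your proposal is therefore correct and takes the same route as the source the paper relies on; the one slip is the passing claim that $\Phi_p^{\star}$ is ``weakly continuous'' (false in general --- were it true, equidistribution itself would be trivial), but this is harmless because you immediately identify the two interchanges as the steps requiring proof and supply the correct tools, namely slicing of $\pi_2^{\ast}(\cdot)\wedge[\Gamma_p]$ against a smooth measure and integrability of the divisor potentials against the trace measures of the partial wedge products at each stage.
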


\subsection{Mderate measures on multi-projective spaces}
We say that a function $\phi$ on $X$ is quasi-plurisubharmonic (q.p.s.h) if it is $c\omega$-p.s.h. for some constant $c>0$.
Consider a measure $\mu$ on $X$, $\mu$ is said to be PLB if all the q.p.s.h. functions are $\mu$-integrable.

Let
\begin{equation}\label{e-7283-6}
\mathcal{F}=\{\phi ~q.p.s.h. ~on~ X: dd^{c}\phi\geq -\omega, \max_{X}\phi =0\}.
\end{equation}
$\mathcal{F}$ is compact in $L^{p}(X)$ and bounded in $L^{1}(\mu)$ when $\mu$ is a PLB measure, see \cite{ds1}.
\begin{definition}\label{def2.7}
Let $\mu$ be a {\rm PLB} measure on $X$. We say that $\mu$ is $(c,\alpha)$-moderate for some constants $c, \alpha >0$ if
\begin{equation*}
\int_{X}\exp(-\alpha \phi)d\mu\leq c
\end{equation*}
for all $\phi\in\mathcal{F}$. The measure $\mu$ is called moderate if there exist constants $c, \alpha >0$ such that it is $(c,\alpha)$-moderate.
\end{definition}
For example, $\omega^n$ is moderate in $X$. In particular, the Fubini-Study volume form is moderate in a projective space. We introduce product of moderate measures used in the main theorem. We define singular moderate measures $\sigma_{p}$ as perturbations of standard measures on $\mathbb{X}_{p}$.
For each $p\geq 1, 1\leq k\leq m, 1\leq j\leq d_{k,p}$, let $u_{j}^{kp}:\mathbb{CP}H_{(2)}^{0}(X, L_{kp})\rightarrow\mathbb{R}$
be an upper-semi continuous function. Fix $0<\rho<1$ and a sequence of positive constants $\{c_{p}\}_{p\geq 1}$.
We call $\{u_{j}^{k,p}\}$ {\it a family of $(c_{p},\rho)$-functions} if all $u_{j}^{k,p}$
satisfy the following two conditions:
\begin{flushleft}
$\bullet$ $u_{j}^{k,p}$ is of class $\mathscr{C}^{\rho}$ with modulus $c_{p}$, \\
$\bullet$ $u_{j}^{k,p}$ is a $c_{p}\omega_{FS}$-p.s.h.
\end{flushleft}
Then for each $p\geq 1$, there is a probability measure
\begin{equation}\label{e-7281}
\sigma_{p}=\prod_{k=1}^{m}\bigwedge_{j=1}^{d_{kp}}\pi_{k,p}^{\ast}(dd^{c}u_{j}^{k,p}+\omega_{FS})
\end{equation}
on $\mathbb{X}_{p}$.  By \cite[Theorem 1.1, Remark 2.12]{sh1},
$\bigwedge_{j=1}^{d_{k,p}}(dd^{c}u_{j}^{k,p}+\omega_{FS})$ is a moderate measure on $\mathbb{CP}H_{(2)}^{0}(X, L_{kp})$
when $c_{p}\leq 1/c^{(\sum_{k=1}^m A_{kp})^{n}}$ for a suitable constant $c>1$, $\forall 1\leq k\leq m, p\geq 1$.
We call
\begin{equation}\label{e-7282}
\sigma=\prod_{p=1}^{\infty}\sigma_{p}=\prod_{p=1}^{\infty}\prod_{k=1}^{m}\bigwedge_{j=1}^{d_{kp}}\pi_{k,p}^{\ast}(dd^{c}u_{j}^{k,p}+\omega_{FS})
\end{equation}
a probability measure on $\mathbb{P}^{X}$ generated by a family of $(c_{p},\rho)$-functions $\{u_{j}^{k,p}\}$ on $\{\mathbb{CP}H_{(2)}^{0}(X, L_{kp})\}$.

\subsection{Capacity estimate on multi-projective space}
Now we study the estimates on multi-projective spaces.
Let $\mathbb{CP}^{\ell_{1}},...,\mathbb{CP}^{\ell_{m}}$ be $m$ projective spaces.
Let $\pi_{k}: \mathbb{CP}^{\ell_{1}}\times...\times\mathbb{CP}^{\ell_{m}}\rightarrow \mathbb{CP}^{\ell_{k}}$
be the natural projection map.
Let $\sigma_{k}$ be a probability moderate measure with respect to a family of $(c_{\ell_{k}},\rho)$-functions
$\{u_{k,j}\}_{j=1}^{\ell_{k}}$ on $\mathbb{CP}^{\ell_{k}}$ defined in \eqref{e-7281}.
Let $\ell=\ell_{1}+...+\ell_{m}$.

Recall that the notation $r(\mathbb{CP}^{\ell_{1}}\times...\times \mathbb{P}^{\ell_{m}}, \omega_{Mp})$
is defined after Lemma \ref{lem2.1}.
We have the following lemma \cite[Lemma 4.6]{cmn}.
\begin{lemma}\label{lem2.8}
Under the above hypotheses,
\begin{equation*}
r(\mathbb{CP}^{\ell_{1}}\times...\times \mathbb{CP}^{\ell_{m}}, \omega_{Mp})\leq r(\ell_{1},...\ell_{m}):=\max_{1\leq k\leq m}\frac{\ell}{\ell_{k}}.
\end{equation*}
\end{lemma}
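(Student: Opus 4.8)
The plan is to reduce the two-sided estimate of Lemma~\ref{lem2.1} to a coefficientwise comparison inside $H^{1,1}$ of the product. Write $Y:=\mathbb{CP}^{\ell_1}\times\cdots\times\mathbb{CP}^{\ell_m}$ and $h_k:=\pi_{k}^{\ast}(\omega_{FS})$, so that $H^{1,1}(Y,\mathbb{R})$ is spanned by $[h_1],\dots,[h_m]$ and the product Kähler form is $\omega_{Mp}=\sum_{k=1}^{m}a_k h_k$ for suitable constants $a_k>0$. Given a positive closed $(1,1)$-current $T$ of mass $1$ on $(Y,\omega_{Mp})$, decompose its class as $[T]=\sum_{k=1}^{m}t_k[h_k]$; pairing $[T]$ with the closed positive form $h_1^{\ell_1}\wedge\cdots\wedge h_k^{\ell_k-1}\wedge\cdots\wedge h_m^{\ell_m}$ and using $h_j^{\ell_j+1}=0$ isolates $t_k$ and shows $t_k\ge 0$. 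First I would take $\alpha$ to be the harmonic representative $\alpha=-\sum_k t_k h_k$ of the class $-[T]$, which depends only on the class of $T$. The $\partial\overline\partial$-lemma on the compact Kähler manifold $Y$ then provides $\varphi$ with $dd^{c}\varphi=T+\alpha=T-\sum_k t_k h_k$; since $dd^{c}\varphi\ge-\sum_k t_k h_k$, this $\varphi$ is q.p.s.h.

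Next I would translate $-r\omega_{Mp}\le\alpha\le r\omega_{Mp}$ into inequalities on the $t_k$. The upper bound holds trivially because $\alpha=-\sum_k t_k h_k\le 0\le r\omega_{Mp}$. The lower bound is where the product structure enters: at each point the tangent space of $Y$ splits as the direct sum of the tangent spaces of the factors, and $h_k$ is block-diagonal, supported on the $k$-th block where it is strictly positive. Consequently $r\omega_{Mp}-\sum_k t_k h_k=\sum_k(ra_k-t_k)h_k$ is a nonnegative $(1,1)$-form if and only if $ra_k\ge t_k$ for every $k$. Hence the smallest admissible $r$ for this particular $T$ equals $\max_{1\le k\le m}t_k/a_k$, and therefore $r(Y,\omega_{Mp})\le\sup_{\|T\|=1}\max_k t_k/a_k$.

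It remains to maximize $\max_k t_k/a_k$ subject to $\|T\|=1$. Since $\|T\|=\int_Y T\wedge\omega_{Mp}^{\ell-1}$ depends only on $[T]$, expanding $\omega_{Mp}^{\ell-1}=(\sum_k a_k h_k)^{\ell-1}$ multinomially and discarding every term containing some $h_k^{\ell_k+1}=0$ leaves only the terms with one factor of degree $\ell_i-1$ and the others of top degree, which gives $\|T\|=\frac{1}{\ell}\bigl(\int_Y\omega_{Mp}^{\ell}\bigr)\sum_{i=1}^{m}\ell_i\,t_i/a_i$. Using the normalization $\int_Y\omega_{Mp}^{\ell}=1$ and setting $x_i:=t_i/a_i\ge 0$, the mass constraint becomes $\sum_i\ell_i x_i=\ell$, and elementary optimization (since $\ell_i x_i\le\sum_j\ell_j x_j=\ell$ forces $x_i\le\ell/\ell_i$, with equality achieved by concentrating all the weight on the index with the smallest $\ell_i$) yields $\sup\max_i x_i=\max_i\ell/\ell_i=\max_k\ell/\ell_k$, as claimed. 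I expect the main obstacle to be the bookkeeping in this intersection-number computation, in particular extracting the normalizing constant $\int_Y\omega_{Mp}^{\ell}=1$ from the definition of $\omega_{Mp}$ in \cite{sh2}; once it is fixed the optimization is immediate, and the computation is consistent with the case $m=1$, where it recovers $r(\mathbb{CP}^{N},\omega_{FS})=1$.
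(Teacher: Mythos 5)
The paper itself offers no proof of this lemma --- it is quoted directly from \cite[Lemma 4.6]{cmn} --- so the comparison can only be with that cited argument, and yours is essentially the same: K\"unneth decomposition of $H^{1,1}$ of the product, the canonical representative $\sum_k t_k\pi_k^{\ast}\omega_{FS}$ of the class of $T$ (which makes $\alpha$ depend only on $[T]$), block-diagonal positivity reducing $-r\omega_{Mp}\le\alpha$ to the coefficient inequalities $t_k\le ra_k$, and the intersection-number identity $\|T\|=\frac{1}{\ell}\bigl(\int_Y\omega_{Mp}^{\ell}\bigr)\sum_i\ell_i t_i/a_i$ followed by the elementary optimization. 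Your computation is correct, and it is a nice feature of your bookkeeping that the bound $\max_k\ell/\ell_k$ comes out independent of the weights $a_k$, requiring only that $\omega_{Mp}$ be a positive combination of the $\pi_k^{\ast}\omega_{FS}$ with $\int_Y\omega_{Mp}^{\ell}=1$; that normalization is precisely how $\omega_{Mp}$ and the bounded constants $c_p$ are defined in \cite[(18),(19)]{sh2}, so the one external input you flag does hold and the proof is complete.
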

The following proposition is taken from \cite[Proposition 3.14]{sh2}.
\begin{proposition}\label{pro2.9}
In the above setting, let $\mathbb{CP}^{\ell_{k}}$ be a projective space endowed
with a probability moderate measure $\sigma_{k}$ defined in \eqref{e-7281}, $\forall 1\leq k\leq m$.
Set $\sigma:=\sigma_{1}\times...\times\sigma_{m}$.
Suppose that $\ell_{1},..,\ell_{m}$ are chosen sufficiently large such that
\begin{equation}\label{e-7282-1}
\begin{split}
\frac{r(\ell_{1},...,\ell_{m})\log\ell}{\min(\ell_{1},...,\ell_{m})}&\ll 1, \\
(\frac{\rho}{4})^{\min(\ell_{1},...,\ell_{m})}\ell&\ll 1. \\
\end{split}
\end{equation}
Then there exist positive constants $\beta_{1}, \beta_2, \xi$ depending only on $m$ such that for $0\leq t\leq \min(\ell_{1},...,\ell_{m})$,
we have
\begin{equation*}
\begin{split}
R(\mathbb{CP}^{\ell_{1}}\times...\times \mathbb{CP}^{\ell_{m}}, \omega_{Mp}, \sigma)&\leq \beta_{1}r(\ell_{1},...\ell_{m})(1+\log\ell), \\
S(\mathbb{CP}^{\ell_{1}}\times...\times \mathbb{P}^{\ell_{m}}, \omega_{Mp}, \sigma)&\leq \beta_{1}r(\ell_{1},...\ell_{m})(1+\log\ell), \\
\Delta(\mathbb{CP}^{\ell_{1}}\times...\times \mathbb{CP}^{\ell_{m}}, \omega_{Mp}, \sigma, t)&\leq \beta_{1}\ell^{\xi}\exp(-\beta_2 t/r(\ell_{1},...\ell_{m})). \\
\end{split}
\end{equation*}
\end{proposition}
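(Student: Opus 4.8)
The three constants to estimate are the capacity quantities $R$, $S$ and $\Delta$ attached to the product $Y:=\mathbb{CP}^{\ell_1}\times\cdots\times\mathbb{CP}^{\ell_m}$, the reference form $\omega_{Mp}$, and the product measure $\sigma$, as defined in \eqref{e-7283-2}. The plan is to reduce all three to a single exponential integrability estimate on $Y$ and then to extract $R$, $S$, $\Delta$ from it by elementary inequalities (Jensen and Chebyshev). First I would invoke Lemma \ref{lem2.8} to replace $r(Y,\omega_{Mp})$ by $r:=r(\ell_1,\dots,\ell_m)=\max_k \ell/\ell_k$, so that every competitor $\varphi\in Q(Y,\omega_{Mp})$ satisfies $dd^{c}\varphi\geq -r\,\omega_{Mp}$.

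The core step is to prove that there is an exponent $\alpha$ of size $\alpha_0/r$, where $\alpha_0$ is the single-factor moderateness exponent furnished by \cite{sh1}, such that
\[
\int_Y \exp(-\alpha\varphi)\,d\sigma \ \leq\ \Big(\prod_{k=1}^m c_k\Big)\exp\big(-\alpha\max_Y\varphi\big)
\]
for every $\varphi\in Q(Y,\omega_{Mp})$, where $c_k$ is the moderateness constant of $\sigma_k$. I would establish this by a Fubini iteration over the factors. Writing $d\sigma=d\sigma_1\cdots d\sigma_m$, I freeze $x_2,\dots,x_m$ and restrict $\varphi$ to the first factor; by the product structure of $\omega_{Mp}$ the slice is $r\,\omega_{FS}$-psh, so $(\varphi-\max_{x_1}\varphi)/r$ lies in the normalized class $\mathcal{F}$ on $\mathbb{CP}^{\ell_1}$. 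Applying the single-factor moderate bound for $\sigma_1$ (whose hypotheses are exactly the conditions \eqref{e-7282-1}) gives $\int_{\mathbb{CP}^{\ell_1}}\exp(-\alpha\varphi)\,d\sigma_1\leq c_1\exp(-\alpha\max_{x_1}\varphi)$. The partial maximum $\max_{x_1}\varphi$ is again q.p.s.h. on the remaining product with the same bound $-r\,\omega_{Mp}$, so the estimate iterates; after $m$ steps the successive maxima compose to $\max_Y\varphi$ and the constants multiply to $\prod_k c_k$. Since the single-factor constants are polynomial in $\ell_k$, this product is at most $\beta_1\ell^{\xi}$ with $\xi$ depending only on $m$.

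From this key estimate the three bounds follow quickly. For $\Delta$: if $\int_Y\varphi\,d\sigma=0$ then $\max_Y\varphi\geq 0$, hence $\int_Y\exp(-\alpha\varphi)\,d\sigma\leq\beta_1\ell^{\xi}$, and Chebyshev gives $\sigma(\varphi<-t)\leq\beta_1\ell^{\xi}e^{-\alpha t}$, i.e.\ the claimed bound with $\beta_2=\alpha_0$. For $R$: restricting to $\max_Y\varphi=0$ gives $\int_Y\exp(-\alpha\varphi)\,d\sigma\leq\beta_1\ell^{\xi}$, and Jensen's inequality yields $-\int_Y\varphi\,d\sigma\leq\alpha^{-1}\log(\beta_1\ell^{\xi})\lesssim r(1+\log\ell)$. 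For $S$: I would run the same exponential estimate for the standard volume $\omega_{Mp}^{\ell}$ (a product of Fubini--Study volumes, hence moderate with polynomial constant), use it to bound $\max_Y\varphi$ by the resulting $R$-type constant when $\int_Y\varphi\,\omega_{Mp}^{\ell}=0$, and then combine with the $R$-bound for $\sigma$ to control $|\int_Y\varphi\,d\sigma|$.

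The main obstacle is the Fubini iteration. I must verify that after integrating out one coordinate the partial maximum $\max_{x_1}\varphi$ is genuinely q.p.s.h. in the remaining variables with the \emph{same} constant $r$ (a standard but delicate envelope argument, valid off a pluripolar set not charged by the PLB measures $\sigma_k$), and that the slice-wise bound stays $r\,\omega_{FS}$ under the restriction of $\omega_{Mp}$ to each factor. Equally delicate is tracking that the product $\prod_k c_k$ of single-factor moderate constants stays of size $\ell^{\xi}$ with $\xi=\xi(m)$; this is precisely where the smallness conditions \eqref{e-7282-1} on $\min(\ell_1,\dots,\ell_m)$ and on $\rho$ enter, guaranteeing both the validity and the polynomial size of the single-factor estimates borrowed from \cite{sh1}.
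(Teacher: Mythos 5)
The paper gives no proof of Proposition \ref{pro2.9}: it is quoted directly from \cite[Proposition 3.14]{sh2}, so there is nothing internal to compare against. Your reconstruction --- Lemma \ref{lem2.8} to replace $r(\mathbb{CP}^{\ell_1}\times\cdots\times\mathbb{CP}^{\ell_m},\omega_{Mp})$ by $r(\ell_1,\dots,\ell_m)$, a Fubini/slicing iteration that reduces the product estimate to the single-factor moderateness of \cite{sh1} (universal exponent, polynomially growing constant, which is exactly where the hypotheses \eqref{e-7282-1} are consumed), then Chebyshev for $\Delta$, Jensen for $R$, and comparison with the Fubini--Study volume $\omega_{Mp}^{\ell}$ for $S$ --- is essentially the argument of the cited source and of its antecedents \cite[Appendix A]{ds1}, \cite{cmn}, and I see no gap in it: the partial maximum over a compact factor of an upper semicontinuous quasi-p.s.h.\ function is again upper semicontinuous and quasi-p.s.h.\ with the same constant, so the iteration closes as you claim.
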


\section{Convergence of Fubini-study currents}

Recall that the Fubini-Study current of $H_{(2)}^0(X,L_{kp})$ is
\begin{equation*}
\gamma_{kp}=\frac{1}{2} dd^c\log \sum_{j=0}^{d_{kp}}\|f_{kp,j}\|^2
=c_1(L_{kp},h_{kp})+\frac{1}{2}dd^c\log B_{kp}.
\end{equation*}

In this section, we will prove the following.
\begin{proposition}\label{pro3.1}
With the same notations and assumptions in Theorem 1, there exists $C>0$ such that
\begin{equation*}
\begin{split}
&|<\frac{1}{\prod_{k=1}^m A_{kp}}\gamma_{1p}\wedge\cdots\wedge \gamma_{mp}-\omega_{1}\wedge\cdots\wedge \omega_{n}, \phi>|\\
&\leq  C \sum_{k=1}^m\bigl(\frac{\log A_{kp}}{A_{kp}}+A_{kp}^{-a_k}\bigr)\|\phi\|_{\mathcal{C}^2}.
\end{split}
\end{equation*}
for any $(n-m,n-m)$-form $\phi$ of class $\mathcal{C}^2$.
\end{proposition}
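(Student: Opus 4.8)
The plan is to compare the two wedge products factor by factor, reducing Proposition \ref{pro3.1} to a single-current estimate. Write $\widetilde\gamma_{kp}:=A_{kp}^{-1}\gamma_{kp}$ and record the decomposition coming from the definition of $\gamma_{kp}$ together with Assumptions 1 and 2: setting
\begin{equation*}
\beta_{kp}:=\frac{1}{A_{kp}}c_1(L_{kp},h_{kp})-\omega_k,\qquad \psi_{kp}:=\log B_{kp}-n\log A_{kp},
\end{equation*}
one has the identity $\widetilde\gamma_{kp}-\omega_k=\beta_{kp}+\frac{1}{2A_{kp}}dd^c\psi_{kp}$, where Assumption 2 gives $\|\beta_{kp}\|\le C_0A_{kp}^{-a_k}$ and Assumption 1 gives $|\psi_{kp}|\le\log M_0$ uniformly on $X$. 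The first step is the telescoping identity
\begin{equation*}
\widetilde\gamma_{1p}\wedge\cdots\wedge\widetilde\gamma_{mp}-\omega_1\wedge\cdots\wedge\omega_m=\sum_{k=1}^m\omega_1\wedge\cdots\wedge\omega_{k-1}\wedge(\widetilde\gamma_{kp}-\omega_k)\wedge\widetilde\gamma_{k+1,p}\wedge\cdots\wedge\widetilde\gamma_{mp},
\end{equation*}
so that, after pairing with the test form $\phi$, it suffices to bound each summand by $C(\frac{\log A_{kp}}{A_{kp}}+A_{kp}^{-a_k})\|\phi\|_{\mathcal{C}^2}$. Equivalently one may run this as an induction on $m$: peel off the last factor, apply the single-current estimate to $\langle\widetilde\gamma_{mp}-\omega_m,\ \omega_1\wedge\cdots\wedge\omega_{m-1}\wedge\phi\rangle$ and the inductive hypothesis to the remaining $(m-1)$-fold product tested against $\widetilde\gamma_{mp}\wedge\phi$.

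For the contribution of the exact piece $\frac{1}{2A_{kp}}dd^c\psi_{kp}$ in the $k$-th summand I would integrate by parts, moving $dd^c$ onto $\phi$: since $\Theta_k:=\omega_1\wedge\cdots\wedge\omega_{k-1}\wedge\widetilde\gamma_{k+1,p}\wedge\cdots\wedge\widetilde\gamma_{mp}$ is a positive closed current, this produces $\frac{1}{2A_{kp}}\langle\psi_{kp},\ \Theta_k\wedge dd^c\phi\rangle$, and the constant $n\log A_{kp}$ hidden inside $\log B_{kp}$ drops out precisely because $\Theta_k$ is closed (Stokes). Using $|\psi_{kp}|\le\log M_0$ and the fact that the mass of $\Theta_k$ is cohomological and uniformly bounded---each class $[\widetilde\gamma_{jp}]=A_{jp}^{-1}c_1(L_{jp})$ stays bounded and converges to $[\omega_j]$---this term is $O(A_{kp}^{-1})\|\phi\|_{\mathcal{C}^2}$, which is absorbed into $\frac{\log A_{kp}}{A_{kp}}$. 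For the curvature piece $\beta_{kp}$ one exploits its small mass $C_0A_{kp}^{-a_k}$ from Assumption 2.

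The technical heart---and the main obstacle---is to give rigorous meaning to, and to estimate, the summands as genuine wedge products of currents: $\beta_{kp}$ is signed and the surrounding factors $\omega_j$ and $\widetilde\gamma_{jp}$ are singular, so the pairing is not a naive ``mass times sup''. Well-definedness is guaranteed by the last clause of Assumption 2 (the products $\omega_{j_1}\wedge\cdots\wedge\omega_{j_l}$ are admissible) together with Proposition \ref{pro2.6} (which makes $\gamma_{1p}\wedge\cdots\wedge\gamma_{mp}$ meaningful), but the quantitative bound requires regularising the singular potentials (Demailly/Bedford--Taylor) and passing to the limit by monotone convergence. It is in balancing the regularisation scale against the mass and potential estimates that the logarithmic factor is generated: Assumption 1 enters here through $B_{kp}\le M_0A_{kp}^n$, which controls the sup-norm of the normalised local potential $\frac{1}{2A_{kp}}\log\sum_j|s_j^{k,p}|^2$ by $\frac{n\log A_{kp}}{2A_{kp}}+O(A_{kp}^{-1})$; optimising the regularisation yields the $\frac{\log A_{kp}}{A_{kp}}$ term. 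Summing the $m$ bounds, together with the triangle inequality over the finitely many admissible limit products, gives the stated estimate. I expect the justification of the integration by parts and of the regularisation limit against the singular currents---rather than the algebra---to be where essentially all the work lies.
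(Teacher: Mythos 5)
Your proposal follows essentially the same route as the paper's proof: the same telescoping decomposition over $k$, the same splitting of $\widetilde\gamma_{kp}-\omega_k$ into the curvature error $\beta_{kp}$ (controlled by its mass $C_0A_{kp}^{-a_k}$ from Assumption 2) and the exact piece $\frac{1}{2A_{kp}}dd^c\log B_{kp}$, the same integration by parts moving $dd^c$ onto $\phi$, and the same cohomological argument bounding the masses of the mixed products by replacing each $A_{jp}^{-1}\gamma_{jp}$ with $A_{jp}^{-1}c_1(L_{jp},h_{jp})$ in cohomology. The only real differences are that your normalization $\psi_{kp}=\log B_{kp}-n\log A_{kp}$ yields the slightly sharper bound $O(A_{kp}^{-1})$ for the Bergman term where the paper uses $|\log B_{kp}|\le (n+1)\log A_{kp}$ directly (which is where its $\frac{\log A_{kp}}{A_{kp}}$ actually comes from, not from any regularization balancing), and that the technical justification of the singular wedge products you flag as ``the heart'' is in fact passed over formally in the paper's own computation.
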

\begin{proof}
Let $W_p:= \frac{1}{\prod_{k=1}^m A_{kp}} \gamma_{1p}\wedge\cdots\wedge \gamma_{mp}-\omega_1\wedge\cdots\wedge\omega_m$, $\alpha_{kp}=\frac{c_1(L_{kp},h_{kp})}{A_{kp}}-\omega_k$.
By Assumption 2, $\|\alpha_{kp}\|\leq \frac{C_0}{A_{kp}^{a_k}}$ in the norm of currents
\begin{equation*}
\frac{\gamma_{kp}}{A_{kp}}-\omega_k=\alpha_{kp}+\frac{1}{2A_{kp}}dd^c \log B_{kp}.
\end{equation*}
We have
\begin{equation*}
\begin{split}
&|<W_p,\phi>|\\
&=|\sum\limits_{k=1}^m <\omega_1\wedge\cdots\wedge \omega_{k-1}\wedge (\frac{\gamma_{kp}}{A_{kp}}-\omega_{k})\wedge\frac{\gamma_{k+1,p}}{A_{k+1,p}}\wedge\cdots\wedge\frac{\gamma_{m,p}}{A_{mp}},\phi>|\\
&\leq \sum\limits_{k=1}^m |<\omega_1\wedge\cdots\wedge \omega_{k-1}\wedge (\alpha_{kp}+\frac{1}{2A_{kp}}dd^c \log B_{kp})\wedge \frac{\gamma_{k+1,p}}{A_{k+1,p}}\wedge\cdots\wedge\frac{\gamma_{m,p}}{A_{mp}},\phi>|.
\end{split}
\end{equation*}
Note that there exists a constant $c>0$ such that
\begin{equation*}
-\frac{cC_0}{A_{kp}}\omega^{n-m+1}\leq \alpha_{kp}\wedge \phi \leq \frac{cC_0}{A_{kp}^{a_k}}\omega^{n-m+1}
\end{equation*}
in the sense of currents. Then,
\begin{equation*}
\begin{split}
&|<W_p,\phi>|\\
&= |\sum\limits_{k=1}^m \frac{cC_0}{A_{kp}^{a_k}}\|\phi\|_{\mathcal{C}^0}\int_{X}|\omega_1\wedge\cdots\wedge\omega_{k-1}\wedge\omega^{n-m+1}\wedge\frac{\gamma_{k+1,p}}{A_{k+1,p}}\wedge\cdots\wedge\frac{\gamma_{m,p}}{A_{m,p}}|\\ &+\sum\limits_{k=1}^m \int_{X}|\frac{\log B_{kp}}{2A_{kp}}dd^c\phi\wedge (\omega_1\wedge\cdots\wedge\omega_{k-1}\wedge\frac{\gamma_{k+1,p}}{A_{k+1,p}}\wedge\cdots\wedge\frac{\gamma_{m,p}}{A_{m,p}})|\\
&=I+II.
\end{split}
\end{equation*}

We choose $A_p\geq M_0$ for large $p$,
\begin{equation*}
A_{kp}^{n-1}\leq B_{kp}(x)\leq A_{kp}^{n+1},
\end{equation*}
so $|\log B_{kp}|\leq (n+1)\log A_{kp}$.
Hence
\begin{equation*}
\begin{split}
II= |\sum\limits_{k=1}^m \frac{nc\log A_{kp}}{A_{kp}}\|\phi\|_{\mathcal{C}^2}\int_{X}|\omega_1\wedge\cdots\wedge\omega_{k-1}\wedge\omega^{n-m+1}\wedge\frac{\gamma_{k+1,p}}{A_{k+1,p}}\wedge\cdots\wedge\frac{\gamma_{m,p}}{A_{m,p}}|.
\end{split}
\end{equation*}
Then we have
\begin{equation*}
\begin{split}
&|<W_p,\phi>|\\
= &|\sum\limits_{k=1}^m (\frac{nc\log A_{kp}}{A_{kp}}+\frac{cC_0}{A_{kp}^{a_k}})\|\phi\|_{\mathcal{C}^2}| \int_{X}\omega_1\wedge\cdots\wedge\omega_{k-1}
\wedge\omega^{n-m+1}\wedge\frac{\gamma_{k+1,p}}{A_{k+1,p}}\wedge\cdots\wedge\frac{\gamma_{m,p}}{A_{m,p}}|\\
=&|\sum\limits_{k=1}^m (\frac{nc\log A_{kp}}{A_{kp}}+\frac{cC_0}{A_{kp}^{a_k}})\|\phi\|_{\mathcal{C}^2}\\
&\int_{X}|\omega_1\wedge\cdots\wedge\omega_{k-1}
\wedge\omega^{n-m+1}\wedge\frac{c_1(L_{k+1,p},h_{k+1,p})}{A_{k+1,p}}\wedge\cdots\wedge\frac{c_1(L_{m,p},h_{m,p})}{A_{m,p}}|\\
\leq & \sum\limits_{k=1}^m 2^{m-k} (\frac{nc\log A_{kp}}{A_{kp}}+\frac{cC_0}{A_{kp}^{a_k}})\|\phi\|_{\mathcal{C}^2}\int_{X}|\omega_1\wedge\cdots\wedge\omega_{k-1}
\wedge\omega^{n-m+1}\wedge\omega_{k+1}\wedge\cdots\wedge\omega_{m}|.
\end{split}
\end{equation*}
The proof is completed.
\end{proof}

\section{Proof of main theorems}
Our proof is based on Dinh-Sibony's technique on equidistribution theorems. The key estimate is the following.
\begin{equation}\label{e-7291}
\begin{split}
&|\langle\frac{1}{\prod_{k=1}^m A_{kp}}[s_p=0]-\omega_1\wedge\cdots\wedge\omega_m,\phi\rangle|\\
&\leq  |\langle\frac{1}{\prod_{k=1}^m A_{kp}}(\Phi_p^{\ast}(\delta_{s_p})-\Phi_p^{\ast}(\sigma_p),\phi\rangle|\\
&+|\langle\frac{1}{\prod_{k=1}^m A_{kp}}(\Phi_p^{\ast}(\sigma_p)-\Phi_p^{\ast}(\omega_{Mp}^{d_p}),\phi\rangle|\\
&+ |\langle\frac{1}{\prod_{k=1}^m A_{kp}}(\Phi_p^{\ast}(\omega_{Mp}^{d_p})-\omega_1\wedge\cdots\wedge\omega_m,\phi\rangle|\\
&= I+ II+III.
\end{split}
\end{equation}
The estimate of $III$ is already done in Section 3.
Now we deal with $II$. First, we compute $\delta_p^1$ and $\delta_p^2$,
where
\begin{equation}\label{e-7292}
\begin{split}
\delta_p^1&=\int_X (\Phi_p^{\ast}(\omega_{Mp}^{d_p})\wedge\omega^{n-m}, \\
\delta_p^2&=\int_X (\Phi_p^{\ast}(\omega_{Mp}^{d_p-1})\wedge\omega^{n-m+1}.
\end{split}
\end{equation}
\begin{lemma}\label{lem4.1}
There exists a constant $C>1$, such that
\begin{equation*}
 \frac{\prod\limits_{k=1}^m A_{kp}}{C}\leq \delta_p^1\leq C \prod_{k=1}^m A_{kp},
\end{equation*}
\begin{equation*}
 \frac{1}{C}\frac{\prod\limits_{k=1}^m A_{kp}}{\sum\limits_{k=1}^m A_{kp}}\leq \delta_p^2\leq C\frac{\prod\limits_{k=1}^m A_{kp}}{\sum\limits_{k=1}^m A_{kp}}.
\end{equation*}
\end{lemma}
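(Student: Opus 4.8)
The plan is to exploit that both $\delta_p^1$ and $\delta_p^2$ are integrals of positive closed currents against fixed closed forms, so each depends only on the cohomology classes involved and may be evaluated by replacing $\gamma_{kp}$ with $A_{kp}\omega_k$ up to a controlled error. Throughout I would use that $\log B_{kp}\in L^1(X,\omega^n)$, so the class of $\gamma_{kp}$ coincides with that of $c_1(L_{kp},h_{kp})$, and that by Assumption 2 one has $A_{kp}^{-1}\{c_1(L_{kp},h_{kp})\}=\{\omega_k\}+\{\alpha_{kp}\}$ in cohomology with $\|\alpha_{kp}\|\leq C_0A_{kp}^{-a_k}\to 0$.

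First I would treat $\delta_p^1$. By Proposition \ref{pro2.6}, $\Phi_p^{\ast}(\omega_{Mp}^{d_p})=\gamma_{1,p}\wedge\cdots\wedge\gamma_{m,p}$, hence $\delta_p^1=\int_X\gamma_{1,p}\wedge\cdots\wedge\gamma_{m,p}\wedge\omega^{n-m}$, a purely cohomological pairing. Expanding the product $\prod_k(\{\omega_k\}+\{\alpha_{kp}\})$ multilinearly produces a leading term $\prod_k A_{kp}\int_X\omega_1\wedge\cdots\wedge\omega_m\wedge\omega^{n-m}$, whose integral is a fixed positive number by the positive-measure hypothesis in Assumption 2; every other term carries at least one factor $\{\alpha_{kp}\}$ and is bounded by $C\prod_k A_{kp}\cdot\max_k A_{kp}^{-a_k}$, of strictly lower order. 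Dividing by $\prod_k A_{kp}$ shows that $\delta_p^1/\prod_k A_{kp}$ tends to a positive constant, giving the two-sided bound for $p$ large.

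For $\delta_p^2$ I would first compute $\Phi_p^{\ast}(\omega_{Mp}^{d_p-1})$ by extending the computation behind Proposition \ref{pro2.6}. Expanding $\omega_{Mp}^{d_p-1}$ multinomially, the only surviving monomials carry the top Fubini-Study power on all factors but one, lowering the power on the $j$-th factor by one; pulling these back through the incidence graph $\Gamma$ yields $\Phi_p^{\ast}(\omega_{Mp}^{d_p-1})=\sum_{j=1}^m c_{j,p}\bigwedge_{k\neq j}\gamma_{k,p}$, where the coefficients $c_{j,p}$ (essentially $d_{j,p}/d_p$) are bounded above and below by positive constants because all $A_{kp}$ share the same order of growth. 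Then $\delta_p^2=\sum_j c_{j,p}\int_X\bigwedge_{k\neq j}\gamma_{k,p}\wedge\omega^{n-m+1}$, and the same cohomological expansion gives $\int_X\bigwedge_{k\neq j}\gamma_{k,p}\wedge\omega^{n-m+1}\asymp\prod_{k\neq j}A_{kp}$, the leading constant $\int_X\bigwedge_{k\neq j}\omega_k\wedge\omega^{n-m+1}$ being positive again by Assumption 2. Summing over $j$ and using once more that the $A_{kp}$ have the same order gives $\delta_p^2\asymp\prod_{k\neq 1}A_{kp}\asymp\prod_k A_{kp}\big/\sum_k A_{kp}$.

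The main obstacle is the exact evaluation of $\Phi_p^{\ast}(\omega_{Mp}^{d_p-1})$: Proposition \ref{pro2.6} supplies only the top power $\omega_{Mp}^{d_p}$, so for $\delta_p^2$ one must carry out the meromorphic-transform pullback of the mixed Fubini-Study monomials $\pi_j^{\ast}\omega_{FS}^{d_{jp}-1}\wedge\bigwedge_{k\neq j}\pi_k^{\ast}\omega_{FS}^{d_{kp}}$ and track the multinomial coefficients, verifying they stay comparable to a constant under the same-order hypothesis. The remaining steps are the routine cohomological expansions, whose error terms are governed by $\|\alpha_{kp}\|\to 0$ and whose leading coefficients are positive thanks to the well-definedness and positivity of the mixed products $\omega_{j_1}\wedge\cdots\wedge\omega_{j_l}$ guaranteed by Assumption 2.
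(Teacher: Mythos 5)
Your proposal follows essentially the same route as the paper's own proof: for $\delta_p^1$ it uses Proposition \ref{pro2.6} plus the cohomological expansion from Assumption 2 with positive leading coefficient $\int_X\omega_1\wedge\cdots\wedge\omega_m\wedge\omega^{n-m}$, and for $\delta_p^2$ it identifies the same key step — the multinomial expansion of $\omega_{Mp}^{d_p-1}$, whose surviving monomials drop one Fubini--Study power on a single factor, pulled back through the incidence graph to give $\sum_j c_{j,p}\bigwedge_{k\neq j}\gamma_{k,p}$ with $c_{j,p}$ comparable to $d_{jp}/d_p$ — which the paper carries out via the cohomological equivalences $\omega_{FS}^{d_{kp}}\sim\delta_{s_{kp}}$, $\omega_{FS}^{d_{kp}-1}\sim[\mathcal{D}_{kp}]$ and the computation $\Phi_{kp}^{\ast}([\mathcal{D}_{kp}])=[X]$. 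The argument is correct and matches the paper's, down to the use of the same-order-of-growth hypothesis to convert $\sum_j\prod_{k\neq j}A_{kp}$ into $\prod_k A_{kp}\big/\sum_k A_{kp}$.
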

\begin{proof}
By Proposition \ref{pro2.6}, we have
\begin{equation*}
\begin{split}
\delta_p^1&=\int_X c_1(L_{1p},h_{1p})\wedge\cdots\wedge c_1(L_{mp},h_{mp})\wedge\omega^{n-m}\\
&\approx \prod\limits_{k=1}^m A_{kp}\int_X\omega_1\wedge\cdots\wedge_m\wedge\omega^{n-m}.
\end{split}
\end{equation*}
By Assumption 1, $\int_X\omega_1\wedge\cdots\wedge_m\wedge\omega^{n-m}>0$, then we have $\delta_p^1\approx \prod\limits_{k=1}^m A_{kp}$.

To compute $\delta_p^2$, we recall that
\begin{equation*}
\dim H^{\ell,\ell}(\mathbb{CP}^{N})=1,
\end{equation*}
\begin{equation*}
\dim H^{\ell_1,\ell_2}(\mathbb{CP}^{N})=0, \ell_1\neq \ell_2
\end{equation*}
for cohomology groups associated to sheaf of currents.
Then $\omega_{FS}^{d_{kp}}$ and $\delta_{s_{kp}}, \omega_{FS}^{d_{kp}-1}$ and $[\mathcal{D}_{kp}]$ have the same cohomology classes, where $\delta_{s_{kp}}$ and $\mathcal{D}_{kp}$ are generic point and complex line respectively in $\mathbb{CP}^{d_{kp}}$. By the definition of meromorphic transform, we have
\begin{equation*}
\begin{split}
\langle \Phi_{kp}^{\ast}([\mathcal{D}_{kp}]), \phi\rangle
= & \langle (\pi_1)_{\ast} (\pi_2)^{\ast}[\mathcal{D}_{kp}]\wedge[\Gamma_{kp}], \phi\rangle\\
=&(\pi_2)^{\ast}[\mathcal{D}_{kp}]\wedge[\Gamma_{kp}], (\pi_1)^{\ast} \phi\rangle\\
=& \langle [\pi_2^{-1}([\mathcal{D}_{kp})\bigcap \Gamma_{kp}], \pi_1^{\ast}\phi\rangle\\
= &\int_{\pi_2^{-1}(\mathcal{D}_{kp})\bigcap \Gamma_{kp}} \pi_1^{\ast}\phi.
\end{split}
\end{equation*}
Note that $\pi_2^{-1}(\mathcal{D}_{kp})\bigcap \Gamma_{kp}=\{(x, s_{kp})\in X\times \mathcal{D}_{kp}, s_{kp}(x)=0\}$.

Since $\mathcal{D}_{kp}$ is generic, $\forall x\in X$, there exists a unique $s_{kp}\in \mathcal{D}_{kp}$ such that $s_{kp}(x)=0$, where $\Gamma_{kp}=\{(x,s_{kp})\in X\times\mathbb{CP}^{d_{kp}}: s_{kp}(x)=0\}$.
So $\pi_1: \pi_2^{-1}(\mathcal{D}_{kp})\bigcap \Gamma_{kp}\rightarrow X$ is bijective. Hence, $\langle\Phi_{kp}^{\ast}([\mathcal{D}_{kp}]), \phi\rangle=\int_X \phi$, i.e. $\Phi_{kp}^{\ast}([\mathcal{D}_{kp}])=[X]=1$.

Note that
\begin{equation*}
\begin{split}
\Phi_{kp}^{*}(\omega_{Mp}^{d_p-1})
= & \sum\limits_{k=1}^m\frac{d_{kp}}{c_pd_p}\Phi_{p}^{\ast}(\{s_{1p}\}\times\cdots\times\{s_{k-1,p}\}\times\{\mathcal{D}_{kp}\}\times\cdots\times\{s_{mp}\})\\
=&\sum\limits_{k=1}^m\frac{d_{kp}}{c_pd_p}[s_{1p}=0]\wedge\cdots\wedge\Phi_{kp}^{\ast}([\mathcal{D}_{kp}])\wedge\cdots\wedge[s_{mp}=0],
\end{split}
\end{equation*}
where the bounded sequence $\{c_p\}$ is mentioned before Theorem \ref{thm2.2}.
Then,
\begin{equation*}
\begin{split}
\delta_p^2&=\sum\limits_{k=1}^m\frac{d_{kp}}{c_pd_p}\int_{X} c_1(L_{1p},h_{1p})\wedge\cdots\wedge\widehat{c_1(L_{kp},h_{kp})}\wedge\cdots\wedge c_1(L_{mp},h_{mp})\wedge\omega^{n-m+1}\\
&\approx \sum\limits_{k=1}^m\frac{d_{kp}}{c_pd_p}\frac{\prod\limits_{j=1}^mA_{jp}}{A_{kp}}\int_X \omega_1\wedge\cdots\wedge\widehat{\omega_k}\wedge\omega_m\wedge^{n-m+1}\\
&\approx \frac{\prod\limits_{k=1}^mA_{kp}}{\sum\limits_{k=1}^m A_{kp}}.
\end{split}
\end{equation*}
The last approximation follows from the fact that $\{A_{kp}\}_{p=1}^{\infty}$ have the same infinity order.
The proof is completed.
\end{proof}

Now we are in a position to estimate the term $II$.
\begin{proposition}\label{pro4.2}
 We have
\begin{equation*}
|\langle\frac{1}{\prod_{k=1}^mA_{kp}}(\Phi_p^{\ast}(\sigma_p)-\Phi_p^{\ast}(\omega_{Mp}^{d_p})), \phi\rangle|\leq \frac{C\log(\sum\limits_{k=1}^mA_{kp})}{\sum\limits_{k=1}^mA_{kp}}\|\phi\|_{\mathcal{C}^2}.
\end{equation*}
\end{proposition}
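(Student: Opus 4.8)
The plan is to read off Proposition \ref{pro4.2} from the Dinh--Sibony comparison estimate of Theorem \ref{thm2.3}, applied with the measure $\mu_p=\sigma_p$, and then to convert the abstract bound into the explicit rate by inserting the size estimates of Lemma \ref{lem4.1} together with the capacity bound for $S$ from Proposition \ref{pro2.9}. Since $\sigma_p$ is a moderate, hence PLB, probability measure on $\mathbb{X}_p$, Theorem \ref{thm2.3} gives
\[
\bigl|\bigl\langle (\delta_p^1)^{-1}(\Phi_p^{\ast}(\sigma_p)-\Phi_p^{\ast}(\omega_{Mp}^{d_p})),\phi\bigr\rangle\bigr|
\leq 2\,S(\mathbb{X}_p,\omega_{Mp},\sigma_p)\,\delta_p^2\,(\delta_p^1)^{-1}\,\|\phi\|_{\mathcal{C}^2}.
\]
By the lower bound $\delta_p^1\geq (\prod_{k=1}^m A_{kp})/C$ of Lemma \ref{lem4.1}, the normalization $1/\prod_{k=1}^m A_{kp}$ in the statement differs from $(\delta_p^1)^{-1}$ by at most a bounded factor, so I may replace the former by the latter at the cost of enlarging $C$.

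It then remains to bound $S(\mathbb{X}_p,\omega_{Mp},\sigma_p)$ and the ratio $\delta_p^2(\delta_p^1)^{-1}$ separately. For the ratio, Lemma \ref{lem4.1} gives $\delta_p^2\approx (\prod_{k=1}^m A_{kp})/(\sum_{k=1}^m A_{kp})$ and $\delta_p^1\approx \prod_{k=1}^m A_{kp}$, so that $\delta_p^2(\delta_p^1)^{-1}\approx (\sum_{k=1}^m A_{kp})^{-1}$. For $S$, I would apply the middle inequality of Proposition \ref{pro2.9} with $\ell_k=d_{kp}$, which yields $S\leq \beta_1\, r(d_{1p},\ldots,d_{mp})\,(1+\log d_p)$, where $d_p=\sum_k d_{kp}$.

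The decisive point is to show that the factor $r(d_{1p},\ldots,d_{mp})=\max_k d_p/d_{kp}$ stays bounded and that $1+\log d_p\approx \log(\sum_{k=1}^m A_{kp})$. Both follow from the global hypotheses: the assumption in Assumption 1 that the $A_{kp}$ share a common order of infinity, combined with $A_{kp}^n/M_1\leq d_{kp}\leq M_1 A_{kp}^n$, forces the dimensions $d_{1p},\ldots,d_{mp}$ to be mutually comparable up to a fixed constant, whence $r(d_{1p},\ldots,d_{mp})$ is bounded independently of $p$, and $d_p\approx A_{kp}^n\approx (\sum_{k=1}^m A_{kp})^n$ up to constants, so that $\log d_p$ is comparable to $\log(\sum_{k=1}^m A_{kp})$. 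The same comparability and the divergence $\min_k d_{kp}\to\infty$ guarantee that the smallness requirements \eqref{e-7282-1} and the moderateness threshold $c_p\leq 1/c^{(\sum_k A_{kp})^n}$ hold for $p$ large, so Proposition \ref{pro2.9} applies and gives $S\leq C\log(\sum_{k=1}^m A_{kp})$.

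Substituting $1/\prod_k A_{kp}\approx (\delta_p^1)^{-1}$, $\delta_p^2(\delta_p^1)^{-1}\approx (\sum_k A_{kp})^{-1}$, and $S\leq C\log(\sum_k A_{kp})$ into the inequality from Theorem \ref{thm2.3} produces exactly the claimed bound $\frac{C\log(\sum_{k=1}^m A_{kp})}{\sum_{k=1}^m A_{kp}}\|\phi\|_{\mathcal{C}^2}$. I expect the only genuine obstacle to be the bookkeeping of the preceding paragraph, namely verifying the comparability of the $d_{kp}$ --- this is what simultaneously tames $r$ and turns $\log d_p$ into $\log(\sum_k A_{kp})$; everything else is a direct substitution.
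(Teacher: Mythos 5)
Your proof is correct and is essentially the paper's own argument: apply Theorem \ref{thm2.3} with $\mu_p=\sigma_p$, bound $S(\mathbb{X}_p,\omega_{Mp},\sigma_p)\leq C\log(\sum_{k=1}^m A_{kp})$ via Proposition \ref{pro2.9} (whose hypotheses hold by Assumption 1, exactly as you verify in detail), and use Lemma \ref{lem4.1} both for $\delta_p^2/\delta_p^1\approx(\sum_{k=1}^m A_{kp})^{-1}$ and for trading $(\delta_p^1)^{-1}$ against $1/\prod_{k=1}^m A_{kp}$. One trivial correction: converting the normalization requires the \emph{upper} bound $\delta_p^1\leq C\prod_{k=1}^m A_{kp}$ of Lemma \ref{lem4.1} (so that $\delta_p^1/\prod_{k=1}^m A_{kp}$ stays bounded), not the lower bound you cite, but since the lemma gives both directions this changes nothing.
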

\begin{proof}
Recall that $d_{kp}\approx A_{kp}^n$ and $\{d_{1p}\},\cdots,\{d_{mp}\}$ satisfy the conditions in Proposition \ref{pro2.9} due to Assumption 1. By Lemma \ref{lem4.1} and Proposition \ref{pro2.9} we deduce that $S(\mathbb{X}_p, \omega_{Mp},\sigma_p)\leq C\log(\sum\limits_{k=1}^mA_{kp})$.
By Lemma \ref{lem4.1},
\begin{equation*}
\delta_p^2/\delta_p^1\approx \frac{1}{\sum\limits_{k=1}^m A_{kp}}.
\end{equation*}
Then it follows from Theorem \ref{thm2.3} that
\begin{equation*}
\begin{split}
&|\frac{1}{\prod\limits_{k=1}^m A_{kp}}(\Phi_{p}^{\ast}(\sigma_p)-\Phi_{p}^{\ast}(\omega_{Mp}^{d_p}),\phi\rangle|\\
\leq &CS(\mathbb{X}_p,\omega_{Mp},\sigma_p)(\delta_p^2/\delta_p^1)\|\phi\|_{\mathcal{C}^2}\\
\leq &C\frac{\log(\sum\limits_{k=1}^mA_{kp})}{\sum\limits_{k=1}^mA_{kp}}\|\phi\|_{\mathcal{C}^2}.
\end{split}
\end{equation*}
The proof is completed.
\end{proof}
Next we study the estimate of the term $I$.
\begin{proposition}\label{pro4.3}
For $\sigma$-a.e.$\{s_p\}\in \mathbb{P}^{X}$, we have
\begin{equation*}
\frac{1}{\prod\limits_{k=1}^m A_{kp}}(\Phi_{p}^{\ast}(\delta_{s_p})-\Phi_{p}^{\ast}(\sigma_p))
\end{equation*}
tens to $0$.
\end{proposition}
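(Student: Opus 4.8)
The plan is to realize this as an application of the almost-sure convergence result in Theorem \ref{thm2.4}, applied to the sequence of meromorphic transforms $\Phi_p$ with the measures $\sigma_p$. First I would recall that by Lemma \ref{lem2.5} we have $\Phi_p^{\ast}(\delta_{s_p})=[s_p=0]$, so the quantity under study is exactly the ``error term'' $\delta^1(\Phi_p)^{-1}(\Phi_p^{\ast}(\delta_{s_p})-\Phi_p^{\ast}(\sigma_p))$ up to the normalization by $\prod_{k=1}^m A_{kp}$; by Lemma \ref{lem4.1} this normalization is comparable to $\delta^1(\Phi_p)=\delta_p^1$, so proving that $\delta_p^1{}^{-1}(\Phi_p^{\ast}(\delta_{s_p})-\Phi_p^{\ast}(\sigma_p))\to 0$ in the sense of currents suffices. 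Theorem \ref{thm2.4} delivers precisely this almost-sure convergence, provided its two hypotheses are verified.

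Thus the core of the argument is to check the two conditions of Theorem \ref{thm2.4}. For the first, I would combine the capacity estimates of Proposition \ref{pro2.9} with Lemma \ref{lem4.1}: by Proposition \ref{pro2.9} we have $R(\mathbb{X}_p,\omega_{Mp},\sigma_p)\leq \beta_1 r(d_{1p},\dots,d_{mp})(1+\log d_p)$, and since $d_{kp}\approx A_{kp}^n$ all have the same order of infinity, the ratio $r(d_{1p},\dots,d_{mp})=\max_k d_p/d_{kp}$ stays bounded. Together with $\delta_p^2/\delta_p^1\approx (\sum_k A_{kp})^{-1}$ from Lemma \ref{lem4.1}, this gives
\begin{equation*}
R(\mathbb{X}_p,\omega_{Mp},\sigma_p)\,\delta^2(\Phi_p)\,\delta^1(\Phi_p)^{-1}\leq C\,\frac{\log(\sum_k A_{kp})}{\sum_k A_{kp}}\longrightarrow 0,
\end{equation*}
which is the required decay to zero.

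For the second condition I would use the $\Delta$-estimate of Proposition \ref{pro2.9}, namely $\Delta(\mathbb{X}_p,\omega_{Mp},\sigma_p,t)\leq \beta_1 d_p^{\xi}\exp(-\beta_2 t/r(d_{1p},\dots,d_{mp}))$. Setting the threshold to $t_p=\delta^2(\Phi_p)^{-1}\delta^1(\Phi_p)\,t\approx (\sum_k A_{kp})\,t$ and using the boundedness of $r(d_{1p},\dots,d_{mp})$, the exponent becomes $-\beta_2 t_p/r(\cdots)\approx -C t\sum_k A_{kp}$, so each term is bounded by $d_p^{\xi}\exp(-C t\sum_k A_{kp})$. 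Since $d_p^{\xi}\approx (\sum_k A_{kp}^n)^{\xi}$ grows only polynomially in $\sum_k A_{kp}$ while the exponential decays, and since $\sum_k A_{kp}\to\infty$ with (at least linear) growth in $p$, the series $\sum_{p\geq 1}\Delta(\mathbb{X}_p,\omega_{Mp},\sigma_p,\delta^2(\Phi_p)^{-1}\delta^1(\Phi_p)t)$ converges for every $t>0$.

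Having verified both hypotheses, Theorem \ref{thm2.4} yields that for $\sigma$-a.e. $\{s_p\}$ the sequence $\delta^1(\Phi_p)^{-1}\langle\Phi_p^{\ast}(\delta_{s_p})-\Phi_p^{\ast}(\sigma_p),\phi\rangle$ tends to $0$ uniformly on bounded families of $\mathscr{C}^2$ test forms, and rescaling from $\delta^1(\Phi_p)$ to $\prod_{k=1}^m A_{kp}$ via Lemma \ref{lem4.1} completes the proof. The main obstacle I anticipate is purely bookkeeping: one must confirm that the equal-order-of-infinity hypothesis in Assumption 1 genuinely forces $r(d_{1p},\dots,d_{mp})$ to stay bounded and that the growth conditions \eqref{e-7282-1} of Proposition \ref{pro2.9} hold for $p$ large, since those capacity estimates are only valid once $\min(d_{1p},\dots,d_{mp})$ is large enough; once this is in place, both the convergence-to-zero and the summability conditions follow mechanically from the estimates already established.
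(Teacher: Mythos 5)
Your proof is correct and follows essentially the same route as the paper: both verify the two hypotheses of Theorem \ref{thm2.4} by combining the capacity estimates of Proposition \ref{pro2.9} (via Lemma \ref{lem2.8}) with the degree estimates $\delta_p^2/\delta_p^1\approx (\sum_{k}A_{kp})^{-1}$ of Lemma \ref{lem4.1}, and then invoke Theorem \ref{thm2.4} together with Lemma \ref{lem2.5} and the comparability of $\prod_{k}A_{kp}$ with $\delta_p^1$. The only difference is explicitness: you spell out why $r(d_{1p},\dots,d_{mp})$ stays bounded and note that summability of the $\Delta$-series requires $\sum_{k}A_{kp}$ to grow fast enough in $p$, a point the paper's proof asserts without comment.
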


\begin{proof}
By Lemma \ref{lem2.8} and Proposition \ref{pro2.9}, we have
\begin{equation*}
R(\mathbb{X}_p,\omega_{Mp},\sigma_p)\leq C(1+\log(\sum\limits_{k=1}^m A_{kp})),
\end{equation*}
\begin{equation*}
\Delta(\mathbb{X}_p,\omega_{Mp},\sigma_p,t)\leq C(\sum\limits_{k=1}^m A_{kp})^{\xi}\exp(-\tilde\beta_2t),
\end{equation*}
where $\tilde\beta_2$ is a positive constant.
Then
\begin{equation*}
R(\mathbb{X}_p,\omega_{Mp},\sigma_p)\delta_p^2/\delta_p^1 \rightarrow 0,
\end{equation*}
\begin{equation*}
\sum\limits_{p=1}^{\infty}\Delta(\mathbb{X}_p,\omega_{Mp},\sigma_p,(\delta_p^2/\delta_p^1)t)\leq C\sum\limits_{p=1}^{\infty}(\sum\limits_{k=1}^m A_{kp})^{\xi}\exp(-\tilde\beta_2(\sum\limits_{k=1}^m A_{kp})t)<\infty.
\end{equation*}
Then the proof is completed by applying Theorem \ref{thm2.4}.
\end{proof}

\begin{proof}[End of the proof of Theorem \ref{thm1.1}]: The theorem follwos from Proposition \ref{pro3.1}, Proposition \ref{pro4.2} and Proposition \ref{pro4.3}.
\end{proof}
Now we prove Theorem \ref{thm1.2} by applying Theorem \ref{thm2.2}, which gives also an alternative proof of Theorem \ref{thm1.1}.
\begin{proof}
We take $C_4>0$ to be determined later and set
\begin{equation*}
\varepsilon_p:=\frac{C_4\log(\sum\limits_{k=1}^mA_{kp})}{\sum\limits_{k=1}^mA_{kp}},
\end{equation*}
and
\begin{equation*}
\begin{split}
\eta_{\varepsilon p} =&
\varepsilon_p \delta_p^1/\delta_p^2-3R_p\\
\geq
&C_5\varepsilon_p\log(\sum\limits_{k=1}^mA_{kp})-C\log(\sum\limits_{k=1}^mA_{kp})\\
\geq &C_6\log(\sum\limits_{k=1}^mA_{kp}),
\end{split}
\end{equation*}
where $C_6>0$ determined by $C_4$.
Note that $\log(\sum\limits_{k=1}^mA_{kp})\leq
\min\{\sum\limits_{k=1}^mA_{kp}\}$ by Assumption 1 for large $p$.
We can apply Theorem \ref{thm2.2} and derive that
\begin{equation*}
\begin{split}
\sigma_p(E_p({\varepsilon_p})) \leq
&\Delta(X_p,\omega_{Mp},\sigma_p,\eta_{\varepsilon p})\\
\leq &C_1(\sum\limits_{k=1}^mA_{kp})^{\xi}(\sum\limits_{k=1}^mA_{kp})^{-\tilde\beta_2 C_6}\\
= &C_1(\sum\limits_{k=1}^mA_{kp})^{-\alpha},
\end{split}
\end{equation*}
where $C_4$ is chosen such that $\alpha=\tilde\beta_2 C_6-\xi>0$. Since $\tilde\beta_2$ and $\xi$ are fixed constants, $\alpha>0$ can be arbitrarily chosen.
Set $E_p^\alpha :=E_p(\varepsilon_p)$.
Hence for any $s_p\in X_p\setminus E_p^\alpha$, we have
\begin{equation}\label{e-7293}
\begin{split}
&|\frac{1}{\prod\limits_{k=1}^mA_{kp}}\langle[s_p=0]-\Phi_p^{\ast}(\sigma_p),\phi\rangle|\\
&\leq
\frac{C_7\log(\sum\limits_{k=1}^mA_{kp})}{\sum\limits_{k=1}^mA_{kp}}\|\phi\|_{\mathscr{C}^{2}}.
\end{split}
\end{equation}
Combining Proposition \ref{pro3.1}, Proposition \ref{pro4.2} and \eqref{e-7293}, we obtain
\begin{equation*}
\begin{split}
&\left|\langle\frac{1}{\prod\limits_{k=1}^mA_{kp}}[s_p=0]-\omega_1\wedge \omega_2\cdots\wedge w_m,\phi\rangle\right|\\
&\leq C_2\left(\sum\limits_{k=1}^m
\frac{\log A_{kp}}{A_{kp}}+\frac{\log(\sum\limits_{k=1}^mA_{kp})}{\sum\limits_{k=1}^mA_{kp}}+\sum\limits_{k=1}^mA_{kp}^{-a_{k}}\right)\|\phi\|_{\mathscr{C}^{2}}.
\end{split}
\end{equation*}
Then the proof of Theorem \ref{thm1.2} is completed.
When
$\sum\limits_{p=1}^{\infty}(\sum\limits_{k=1}^mA_{kp})^{-\alpha}<\infty$,
we can prove Theorem 1 by standard arguments using Borel-Cantelli lemma (cf. \cite[Proposition 4.5]{sh2}).
\end{proof}

\section{Dimension growth of a sequence of pseudo-effective line bundles}.

In this section, we provide a dimension growth result which sheds a
light on Assumption 1.
It is enough to consider one sequence of holomorphic line bundles
$(L_p,h_p)$.
We are devoted to proving Theorem \ref{thm1.4}.

We first recall the $L^2$-estimate for line bundles with singular metrics (cf.\cite[Theorem 3.2]{cmn18}).

\begin{theorem}\label{thm5.1}
Let $(X,\omega)$ be a K\"{a}hler manifold of dimension $n$ which admits a complete K\"{a}hler metric. Let $(L,h)$ be a singular Hermitian holomorphic line bundle and let $\lambda: X\rightarrow [0,\infty)$ be a continuous function such that $c_1(L,h)\geq \lambda\omega$. Then for any form $g\in L_{0,1}^2(X,L,loc)$ satisfying
\begin{equation*}
\overline{\partial} g=0, \int_{X}\lambda^{-1}|g|^2_{h}\omega^n <\infty,
\end{equation*}
there exists $u\in L^2(M,L,loc)$ with $\overline\partial u=g$ and
\begin{equation*}
\int_{X}|u|^2_{h}\omega^n \leq\int_{X}\lambda^{-1}|g|^2_{h}\omega^n.
\end{equation*}
\end{theorem}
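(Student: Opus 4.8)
The plan is to prove this by the classical H\"{o}rmander--Andreotti--Vesentini $L^2$-method, in Demailly's formulation for complete K\"{a}hler manifolds, first treating a smooth metric and then recovering the singular case by regularization. Locally write $h=h_{0}e^{-\varphi}$ with $h_{0}$ a smooth reference metric and $\varphi$ quasi-plurisubharmonic, so that $c_{1}(L,h)=c_{1}(L,h_{0})+\ddc\varphi$ as currents and the hypothesis reads $c_{1}(L,h_{0})+\ddc\varphi\geq\lambda\omega$. First I would establish the estimate for a smooth metric obeying the curvature inequality. Then, approximating $\varphi$ from above by smooth weights $\varphi_{\nu}\downarrow\varphi$, solving $\overline\partial u_{\nu}=g$ for the smooth metrics $h_{\nu}=h_{0}e^{-\varphi_{\nu}}$ with the uniform bound $\int_{X}|u_{\nu}|^{2}_{h_{\nu}}\omega^{n}\leq\int_{X}\lambda^{-1}|g|^{2}_{h_{\nu}}\omega^{n}$, and passing to a weak limit, one obtains a solution $u$ for $h$; monotone convergence of the weights together with lower semicontinuity of the $L^{2}$-norm then gives the stated estimate.

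The core is the fundamental a priori inequality. For a smooth metric, the Bochner--Kodaira--Morrey--Kohn identity on the K\"{a}hler manifold $(X,\omega)$ bounds the left-hand side below by the curvature contribution of $(L,h)$, which under the hypothesis $c_{1}(L,h)\geq\lambda\omega$ is pointwise at least $\lambda|\alpha|^{2}_{h}$; hence for every smooth compactly supported $L$-valued $(0,1)$-form $\alpha$,
\begin{equation*}
\|\overline\partial\alpha\|^{2}_{h}+\|\overline\partial^{*}\alpha\|^{2}_{h}\geq\int_{X}\lambda\,|\alpha|^{2}_{h}\,\omega^{n}.
\end{equation*}
Since $\omega$ is complete, Gaffney's theorem guarantees that compactly supported smooth forms are dense in $\mathrm{Dom}(\overline\partial)\cap\mathrm{Dom}(\overline\partial^{*})$ for the graph norm, so this inequality propagates to every $\alpha$ in the common domain; this is the only place where completeness is used.

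Finally I would run the duality argument. Given $g$ with $\overline\partial g=0$ and $\int_{X}\lambda^{-1}|g|^{2}_{h}\omega^{n}<\infty$, decompose any $\alpha\in\mathrm{Dom}(\overline\partial^{*})$ as $\alpha=\alpha_{1}+\alpha_{2}$ with $\alpha_{1}\in\ker\overline\partial$ and $\alpha_{2}\perp\ker\overline\partial$; then $\langle g,\alpha_{2}\rangle=0$ because $g\in\ker\overline\partial$, while $\overline\partial^{*}\alpha_{2}=0$ and $\overline\partial\alpha_{1}=0$, so the fundamental estimate and the Cauchy--Schwarz inequality weighted by $\lambda$ give
\begin{equation*}
|\langle g,\alpha\rangle_{h}|^{2}=|\langle g,\alpha_{1}\rangle_{h}|^{2}\leq\Big(\int_{X}\lambda^{-1}|g|^{2}_{h}\,\omega^{n}\Big)\Big(\int_{X}\lambda|\alpha_{1}|^{2}_{h}\,\omega^{n}\Big)\leq\Big(\int_{X}\lambda^{-1}|g|^{2}_{h}\,\omega^{n}\Big)\,\|\overline\partial^{*}\alpha\|^{2}_{h}.
\end{equation*}
Thus $\overline\partial^{*}\alpha\mapsto\langle g,\alpha\rangle_{h}$ is a bounded functional on $\mathrm{Im}(\overline\partial^{*})$ of norm at most $\big(\int_{X}\lambda^{-1}|g|^{2}_{h}\omega^{n}\big)^{1/2}$, and Hahn--Banach together with the Riesz representation theorem yield $u\in L^{2}(X,L)$ with $\overline\partial u=g$ weakly and $\int_{X}|u|^{2}_{h}\omega^{n}\leq\int_{X}\lambda^{-1}|g|^{2}_{h}\omega^{n}$. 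The step I expect to be hardest is controlling the singular metric: one must check that the curvature inequality, and hence the fundamental estimate with the sharp constant $1$, survives both the regularization and the monotone limit, and that the weak limit of the $u_{\nu}$ really solves $\overline\partial u=g$. A secondary delicate point is justifying the lower bound on the curvature term for $(0,1)$-forms, where---unlike in top bidegree---the geometry of the complete K\"{a}hler metric must be accounted for alongside the curvature of $L$.
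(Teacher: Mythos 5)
The paper does not prove Theorem \ref{thm5.1} at all: it is quoted verbatim (with a typo, $L^2(M,L,loc)$) from \cite[Theorem 3.2]{cmn18}, which in turn goes back to Demailly's 1982 th\'eor\`eme on complete K\"ahler manifolds, and the proof there is exactly the scheme you outline --- Bochner--Kodaira--Nakano inequality, Gaffney-type density of compactly supported forms granted by completeness, then the Hahn--Banach/Riesz duality argument. So your skeleton is the standard one. The problem is that the two points you defer as ``checks'' at the end are not checks; one of them is where your argument, as written, is actually false.

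Concretely: (i) your fundamental a priori inequality $\|\overline\partial\alpha\|_{h}^{2}+\|\overline\partial^{*}\alpha\|_{h}^{2}\geq\int_{X}\lambda|\alpha|_{h}^{2}\,\omega^{n}$ for $L$-valued $(0,1)$-forms does \emph{not} follow from $c_{1}(L,h)\geq\lambda\omega$ alone. In bidegree $(0,1)$ the curvature term of the Bochner--Kodaira identity involves the Ricci curvature of $\omega$ as well: one must view sections of $L$ as $(n,0)$-forms with values in $L\otimes K_{X}^{-1}$, and Demailly's theorem is correspondingly stated for $(n,1)$-forms, with the curvature hypothesis effectively on $L\otimes K_{X}^{-1}$. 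A concrete refutation of your displayed estimate: on a compact genus-$2$ curve take $L=\mathcal{O}(p)$ with a metric satisfying $c_{1}(L,h)\geq\lambda\omega$, $\lambda>0$ constant; your estimate would exclude harmonic $L$-valued $(0,1)$-forms and force $H^{1}(X,L)=0$, contradicting $H^{1}(X,\mathcal{O}(p))\cong H^{0}(X,\mathcal{O}(K_{X}-p))^{\ast}\neq 0$. (This is, in fairness, also a defect of the paper's transcription of \cite[Theorem 3.2]{cmn18} into bidegree $(0,1)$; it is harmless in Section 5 only because there $c_{1}(L_{p},h_{p})\geq A_{p}c\,\omega$ with $A_{p}\to\infty$, so the fixed Ricci term is absorbed for large $p$ after the canonical twist.) (ii) Your reduction of the singular case assumes global smooth approximations $\varphi_{\nu}\downarrow\varphi$ with $c_{1}(L,h_{0})+\ddc\varphi_{\nu}\geq\lambda\omega$ preserved; such loss-free regularization does not exist on a general K\"ahler manifold --- Demailly-type regularization unavoidably costs a positivity error $\varepsilon_{\nu}\omega$, which destroys the sharp constant exactly where $\lambda$ is small or zero. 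The standard proofs avoid this: they establish the a priori inequality directly against the singular weight, regularizing only locally by convolution in charts over the compact support supplied by completeness, or (as in the application in Section 5, where the singular factor $e^{-2bA_{p}\varphi_{z}}$ is an explicit local weight) exploit the specific structure of the singularity. Your weak-limit and monotone-convergence steps are fine once uniform bounds hold, but the uniform curvature bound along the approximation is precisely the ingredient you cannot obtain by the route you propose.
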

\begin{proof}[Proof of Theorem \ref{thm1.4}:]
We add additional local weights to the sequence.
Let $x\in U_{\alpha}\Subset X\setminus\Sigma$, $e_p$ is the local
frame of $L_{p}$ on $U_{\alpha}$.
Fix $r_0>0$ so that the ball $V:=B(x,2r_0)\subset\subset U_{\alpha}$ and let $U:=B(x,r_0)$. Let $\theta\in \mathscr{C}^{\infty}(\mathbb{R})$ be a cut-off function such that $0\leq\theta\leq 1, \theta(t)=1$ for $|t|\leq \frac{1}{2}, \theta(t)=0$ for $|t|\geq 1$. For $z\in U$, define the quasi-psh function $\varphi_z$ on $X$ by
\begin{equation*}
\varphi_z(y))=
\left\{\begin{array}{lc}
\theta\left(\frac{|y-z|}{r_0})\log(\frac{|y-z|}{r_0}\right), \ \  \text{for} \ \ y\in U_{\alpha},\\
0, \ \  \text{for}\ \ y\in X\backslash B(z,r_0).\\
\end{array}\right.
\end{equation*}
Note that $dd^c \phi_z\geq 0$, on $\{y:|y-z|\leq \frac{r_0}{2}\}$. Since $V\Subset U_{\alpha}$, it follows that there exists a constant $c'>0$ such that for all $z\in U$ we have $dd^c\varphi_z\geq -c'\omega$ on $X$ and $dd^c\varphi_z=0$ outside $\overline{V}$.
Since
\begin{equation*}
c_1(L_p,h_p)\geq A_p\eta_pw\geq A_pc_x\omega=A_pc\omega.
\end{equation*}
We can find constants $a,b$ with $a=c-bc'>0$, such that
\begin{equation*}
\begin{split}
c_1(L_p,h_pe^{-bA_p\varphi_{z}})\geq & 0 \ \text{on} \  X.\\
c_1(L_p,h_pe^{-bA_p\varphi_{z}})= &
c_1(L_p,h_p)+bA_pdd^c\varphi_z\\
\geq & A_p(c-bc^{\prime})w=aA_pw \ \text{near} \ \overline{V}.
\end{split}
\end{equation*}

Consider a continuous function $\lambda_p: X\rightarrow
[0,+\infty)$ such that $\lambda=aA_p$ on $\overline{V}$,
$c_1(L_p,h_pe^{-bA_p\varphi_{z}})\geq\lambda_pw$.
Set $ \beta=(\beta_1,...,\beta_n)$ with
$\sum\limits_{j=1}^n\beta_j\leq [bA_p]-n$, and
\begin{equation*}
v_{z,p,\beta}(y)=(y_1-z_1)^{\beta_1}...(y_n-z_n)^{\beta_n}.
\end{equation*}
Let
\begin{equation*}
g_{z,p,\beta}=\overline{\partial}(v_{z,p,\beta}\theta(\frac{|y-z|}{r_0})e_p).
\end{equation*}
Then
\begin{equation*}
\begin{split}
&\int_X\frac{1}{\lambda}|g_{z,p,\beta}|^2_{h_p}e^{-2bA_p\varphi_{z}}\omega^n\\
&=\int_V\frac{1}{\lambda}|g_{z,p,\beta}|^2_{h_p}e^{-2bA_p\varphi_{z}}\omega^n\\
&= \frac{1}{aA_p}\int_{V\setminus
B(z,\frac{r_0}{2})}|v_{z,p,\beta}|^2|\partial\theta(\frac{|y-z|}{r_0})|^2
e^{-2\psi_p}e^{-2bA_p\varphi_{z}}\omega^n,
\end{split}
\end{equation*}
where $\psi_p$ is the local weight of $h_p$.

Note that $\varphi_z$ is bounded on $V\setminus B(z,\frac{r_0}{2})$. Then
\begin{equation*}
\int_X\frac{1}{\lambda}|g_{z,p,\beta}|^2_{h_p}e^{-2bA_p\varphi_{z}}\omega^n<\infty,\forall
p.
\end{equation*}
By applying Theorem \ref{thm5.1}, there exists
$u_{z,p,\beta}\in L^2(X,L_p)$,such that
\begin{equation*}
\overline{\partial}u_{z,p,\beta}=g_{z,p,\beta},
\end{equation*}
and
\begin{equation*}
\begin{split}
&\int_X|u_{z,p,\beta}|^2_{h_p}e^{-2bA_p\varphi_{z}}\omega^n\\
\leq&\int_X\frac{1}{\lambda}|g_{z,p,\beta}|^2_{h_p}e^{-2bA_p\varphi_{z}}\omega^n\\
\end{split}
\end{equation*}
So we construct an element
\begin{equation*}
S_{z,p,\beta}=v_{z,p,\beta}\theta(\frac{|y-z|}{r_0})
e_p-u_{z,p,\beta}
\end{equation*}
in $H_{(2)}^0(X,L_p)$.
For $y\in B(z,\frac{r_0}{2})$,
\begin{equation*}
S_{z,p,\beta}(y)=v_{z,p,\beta}(y) e_p-u_{z,p,\beta}(y),
\end{equation*}
we see that $u_{z,p,\beta}$ is a holomorphic near $z$.

Let $\mathscr{L}$ be the sheaf of holomorphic functions on $X$ vanishing at $z$ and let ${\bf m}\subset \mathscr{O}_{X,z}$ the maximal ideal of the ring of germs of holomorphic function at $z$.
Consider the natural map
\begin{equation*}
L_p\rightarrow L_p\otimes \mathscr{O}_X/\mathscr{L}^{a+1}.
\end{equation*}
This map induces a map in the level of cohomology
\begin{equation*}
J_p^a:H_{(2)}^0(X,L_p)\rightarrow H_{(2)}^0(X,L_p\otimes
\mathscr{O}_X/\mathscr{L}^{a+1})=(L_p)_z\otimes \mathscr{O}_{X,z}/\mathscr{L}^{a+1}.
\end{equation*}
The right hand side of the above map is called the space of $a$-jets of $L^2$-holomorphic sections of $L_p$ at $z$.
We recall the following fact:
\begin{equation*}
\int_{|y_1-z_1|<1,\cdots,|y_n-z_n|<1}\prod\limits_{k=1}^n|y_k-z_k|^{2r_k}|y-z|^{-2bA_p}i^ndy_1\wedge
d\overline{y}_1\wedge \cdots\wedge dy_n\wedge d\overline{y}_n<\infty,
\end{equation*}
if and only if $\sum\limits_{j=1}^nr_j\geq [bA_p]-n+1$.
Then for $u_{z,p,\beta}\in L^2(X,L_p)$, we have
\begin{equation*}
\int_X|u_{z,p,\beta}|^2e^{-2bA_p\varphi_{z}}\omega^n<\infty
\end{equation*}
if and only if $u_{z,p,\beta}$ has vanishing order of at least $[bA_p]-n+1$ at
$z$.
So the $([bA_p]-n)$-jet of $S_{z,p,\beta}$ coincides with
$v_{z,p,\beta}$.
For any such $v_{z,p,\beta},\sum\limits_{j=1}^n\beta_j\leq
[bA_p]-n$, we can construct $S_{z,p,\beta}$ as before such that
\begin{equation*}
J_p^{[bA_p]-n}(S_{z,p,\beta})=v_{z,p,\beta}.
\end{equation*}
So $J_p^{[bA_p]-n}$ is surjective.
Hence
\begin{equation*}
\begin{split}
d_p=&\dim H_{(2)}^0(X,L_p)-1\\
\geq & \dim (\mathscr{O}_{X,z}/\mathscr{L}^{[bA_p]-n+1})-1\\
= & \binom{[bA_p]}{[bA_p]-n}-1\geq C_3A_p^n,
\end{split}
\end{equation*}
for some constant $C_3>0$.
The proof is completed.
\end{proof}

\begin{remark}\label{rem5.1}
To get the upper estimate of $d_p$ by the spirit of Siegel's lemma, we need impose more conditions on the transition functions of each $L_p$.
\end{remark}

\noindent

\end{document}